\newcommand\sC{\mathcal{C}}
\newcommand\sG{\mathcal{G}}
\newcommand\ol{\overline}
\newcommand\oo{\infty}
\newcommand\rad{\operatorname{rad}}
\renewcommand\b{\beta}
\newcommand\g{\gamma}
\renewcommand\a{\alpha}
\newcommand\Om{\Omega}
\newcommand\Ga{\Gamma}
\newcommand\La{\Lambda}
\newcommand\sS{\mathcal{S}}
\newcommand\pc{p_{\mathrm{c}}}
\newcommand\de{\delta}
\newcommand\lra{\leftrightarrow}
\newcommand\xleftrightarrowi{\longleftarrow\!\!\longrightarrow}
\newcommand\xleftrightarrowt{\leftarrow\!\!\rightarrow}
\renewcommand\th{\theta}
\newcommand\bp{\mathbf{p}}
\newcommand\tri{{\triangle}}
\newcommand\hex{{\hexagon}}
\newcommand\Cv{C_{\mathrm{v}}}
\newcommand\Ch{C_{\mathrm{h}}}
\newcommand\ph{p_{\mathrm{h}}}
\newcommand\pv{p_{\mathrm{v}}}
\newcommand\bX{\mathbf{X}}
\newcommand\lest{\le_{\mathrm{st}}}
\newcommand\sE{\mathcal{E}}
\newcommand\sP{\mathcal{P}}
\newcommand\rl{\mathrm{l}}
\newcommand\ul{\underline}
\newcommand\bi{\mathbf{i}}
\newcommand\bj{\mathbf{j}}
\newcommand\what{\widehat}
\newcommand\eps{\varepsilon}
\newcommand\bnu{{\bolds{\nu}}}
\newcommand{\RR}{\mathbb{R}} % real numbers
\newcommand{\NN}{\mathbb{N}} % non-negative integers
\newcommand{\ZZ}{\mathbb{Z}} % integers
\newcommand{\PP}{\mathbb{P}} % probability
\newcommand{\law}{\mathcal{L}} % law
\newcommand{\Dom}{\mathcal{D}} % domain in the plane
\newcommand{\Su}{S^{\Yup}}
\newcommand{\Sd}{S^\Ydown}
\newcommand{\Tu}{T^\vartriangle}
\newcommand{\Td}{T^\triangledown}
\newcommand{\col}{\sC}
\newcommand{\q}{\mathbf{q}}
\newcommand{\p}{\mathbf{p}}
\newcommand{\Lattice}{\mathbb{L}}
\newcommand{\LatticeH}{\mathbb{H}}
\newcommand\Lat{\Lattice}
\renewcommand\r{{\mathbf r}}
\newcommand{\Pqsq}{\mathbb{P}_{\q, \r, p}}
\newcommand{\eqref}[1]{(\ref{#1})}
\newtheorem{thmm}{Theorem}[section]
\newtheorem{lemma}[thmm]{Lemma}
\newtheorem{prop}[thmm]{Proposition}
\newcommand\comp{\circ}
\begin{document}
\begin{frontmatter}

\title{Inhomogeneous bond percolation on square, triangular and hexagonal lattices}
\runtitle{Inhomogeneous bond percolation}

\begin{aug}
\author[A]{\fnms{Geoffrey R.} \snm{Grimmett}\corref{}\ead[label=e1]{g.r.grimmett@statslab.cam.ac.uk}\ead[label=u1,url]{http://www.statslab.cam.ac.uk/\textasciitilde grg/}\thanksref{t1}}
\and
\author[A]{\fnms{Ioan} \snm{Manolescu}\ead[label=e2]{i.manolescu@statslab.cam.ac.uk}\thanksref{t2}}
\thankstext{t1}{Supported in part by the EPSRC under Grant
EP/103372X/1.}
\thankstext{t2}{Supported by the EPSRC and Cambridge University.}
\runauthor{G. R. Grimmett and I. Manolescu}
\affiliation{University of Cambridge}
\address[A]{Statistical Laboratory\\
Centre for Mathematical Sciences\\
University of Cambridge\\
Wilberforce Road\\
Cambridge CB3 0WB\\
United Kingdom\\
\printead{e1}\\
\phantom{E-mail:\ }\printead*{e2}\\
\printead{u1}}%adresu isvedimo komanda gale!
\end{aug}

% HISTORY:
\received{\smonth{6} \syear{2011}}
\revised{\smonth{10} \syear{2011}}

% ABSTRACT
%
\begin{abstract}
The star--triangle transformation is used to obtain an equivalence
extending over the set of all
(in)homogeneous bond percolation models on the square,
triangular and hexagonal lattices. Among the consequences
are box-crossing (RSW) inequalities for such models with
parameter-values at which
the transformation is valid. This is a step toward
proving the universality and
conformality of these processes. It implies criticality of such values,
thereby providing
a new proof of the critical point of inhomogeneous systems. The proofs extend
to certain isoradial models to which previous methods do not apply.
\end{abstract}

% KEYWORDS
%
\begin{keyword}[class=AMS]
\kwd[Primary ]{60K35}
\kwd[; secondary ]{82B43}.
\end{keyword}

\begin{keyword}
\kwd{Bond percolation}
\kwd{inhomogeneous percolation}
\kwd{RSW lemma}
\kwd{box-crossing}
\kwd{star--triangle transformation}
\kwd{Yang--Baxter equation}
\kwd{criticality}
\kwd{universality}.
\end{keyword}

\end{frontmatter}
%

%s1 #&#
\section{Introduction and results}
%s1.1 #&#
\subsection{Overview}
Two-dimensional percolation was studied intensively in the early 1980s,
and again in
the decade since 2000. The principal catalyst of the first period was
the rigorous calculation
by Kesten of a certain critical
point (see~\cite{Kesten80}) and of the second the proof by Smirnov of
Cardy's formula (see~\cite{Smirnov}).
The techniques derived during the first period have proved well adapted
to the needs of the second.
For example, the RSW box-crossing lemmas of~\cite{Russo,Seymour-Welsh}
have a key role in
the study of the conformality of critical percolation. Furthermore,
work of Kesten~\cite{Kesten87} on scaling
theory has provided the groundwork for
exact calculations of critical exponents for two-dimensional
percolation (see~\cite{Nolin,Smirnov-Werner},
e.g.).

A great deal of rigorous mathematics exists for critical site
percolation on the triangular
lattice, but surprisingly little for other critical two-dimensional
models. A major new idea is apparently needed
if we are to develop a parallel theory for, say, critical bond
percolation on the square lattice. Another question is
how to extend methods for homogeneous models to inhomogeneous systems.
The purpose of this
article is to explain one part of how this may be done for
(in)homogeneous bond models on square,
triangular and hexagonal lattices.

The Russo--Seymour--Welsh (RSW) theory of box-crossings (see~\cite{GrimmettPercolation}, Section~11.7)
plays a significant part in the theory of critical site percolation on
the triangular lattice---it is used,
for example, in the proof of Cardy's formula, where it implies that
certain crossing probabilities are uniformly H\"older
(see~\cite{GrimmettGraphs,WWparkcity}). RSW theory applies also to
homogeneous bond
percolation on the square lattice. In contrast, it has been an open problem
(see~\cite{JBquestion,BR,Kestenbook})
to derive an RSW
theory for \textit{inhomogeneous} bond percolation (with
edge-probabilities depending on edge-orientation).
The principal methodological advance of the current paper is a
demonstration that
box-crossing inequalities for one critical model may be translated into
inequalities for all other critical
models within the family of (in)homogeneous bond percolation processes
on square, triangular and
hexagonal lattices. Since such inequalities are known for, say,
homogeneous bond percolation on the square lattice, we establish
them thus for all other models of this family.

This progress is achieved by judicious use of the
star--triangle transformation, inspired by the work
\cite{Baxter399} of Baxter and Enting on the Ising model. The star--triangle
transformation and its ramifications (known as the Yang--Baxter
equation) have been at
the heart of many advances in ``exact solutions'' for systems such as the
six-vertex model,
the chiral Potts model, the dimer model and so on (see, e.g.,
\mbox{\cite{Baxterbook,BdTB,Ken02,McCoybook}}).
It turns out that the star--triangle map has a special affinity for
certain physical models
on so-called isoradial graphs---see, for example, the proof of
conformality for the Ising
model by Smirnov~\cite{Chelkak-Smirnov}.
The relationship with isoradial graphs plays a role in the current
work, particularly
in the results for the ``highly inhomogeneous models'' of Theorems
\ref{generalsquareRSW}--\ref{generalsquarecriticality}.

Our basic approach is to use the star--triangle transformation
to transport open paths from one lattice to another. Some complications
arise during transportation, and these may be controlled using probabilistic
estimates. In a second paper~\cite{GM2}, we use these methods
to prove the universality of certain critical exponents, including the
arm exponents, within
the above family of bond percolation models,
under the assumption that they exist for
any one such model.
The current paper has therefore two principal targets:
to prove a theory of box-crossings of
critical inhomogeneous bond percolation models
in two dimensions,
and to develop techniques for the study of universality across families
of such models.

For the percolation models studied in this paper, Theorem \ref
{mainresult} verifies the
assumption of~\cite{Kes86} under
which the incipient infinite cluster has been shown to exist.
Box-crossing inequalities are useful also in proving scale-invariance
for critical percolation in two dimensions (see~\cite{CamNew2,Smirnov}).
Consider, for example, a domain $\Dom$ in the plane (with a
superimposed lattice $\Lattice$ with mesh-size $\delta$)
and four points, $A$, $B$, $C$ and $D$ distributed anti-clockwise along
its boundary.
Consider the limit (as $\delta\to0$) of the probability
that there exists an open path in $\Dom$ joining
the boundary arcs $AB$ and $CD$.
Cardy~\cite{Cardy}
presented a formula for this limit, and
this was proved by Smirnov~\cite{Smirnov}
for the special case of critical site percolation on the triangular lattice.
Corresponding statements are expected to hold for other lattices but no
proofs are yet known.
One may show that, if the underlying measure has the box-crossing
property (see
Definition~\ref{defbxp}),
then such probabilities are bounded uniformly away from 0 and 1 as
$\delta\to0$.

The box-crossing property plays a significant role in the proof of
Cardy's formula,
in which one shows the uniform convergence of a certain triplet of
discretely harmonic functions
to a limiting triplet of harmonic functions.
This is obtained in two steps: first, one proves tightness for the
family of functions,
then one identifies its subsequential limits.
Tightness follows by an application of the Arzel\`a--Ascoli theorem,
whose pre-compactness hypothesis is met by the fact that the discretely harmonic
functions are uniformly H\"older. The proof of this last fact is via
the box-crossing property.
A full proof of Cardy's formula may be found in
\cite{GrimmettGraphs}, Section~5.7 and~\cite{WWparkcity}, Section~2.

The structure of the paper is as follows.
The necessary notation is introduced in Section~\ref{sectnotation},
and our main
results stated in Sections~\ref{sectresults}--\ref{sechim}.
The star--triangle transformation
is discussed in detail in Section~\ref{secttransformation},
with particular attention to transformations of edge-configurations
and open paths. Proofs for inhomogeneous bond percolation on
the square, triangular and hexagonal lattices
are found in Section~\ref{sectmainpf}, and for the highly
inhomogeneous models in Section
\ref{sectgeneralizations}.

%s1.2 #&#
\subsection{Notation}\label{sectnotation}
The lattices under study are the square, triangular
and hexagonal (or honeycomb) lattices illustrated in Figure \ref
{figlatticeexamples}.
The hypotheses and conclusions of this paper may often be expressed in
terms of
their graph-theoretic properties. Nevertheless,
we choose here to make use of certain planar embeddings
of these lattices.

An embedding of a planar graph in $\RR^2$ gives rise to a so-called
\textit{dual graph}. We shall
make frequent use of duality, for a short account of which the reader
is referred to~\cite{GrimmettPercolation}, Section~11.2.

%f1 #&#
\begin{figure}

\includegraphics{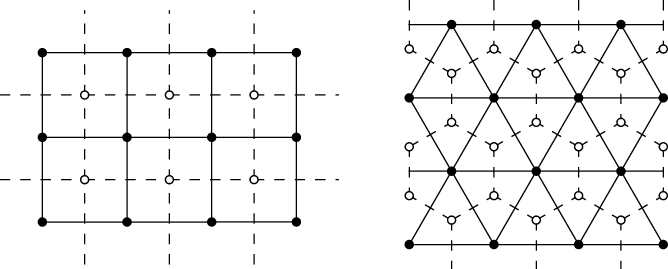}

\caption{The square lattice and its dual square lattice. The
triangular lattice and its dual hexagonal lattice.}
\label{figlatticeexamples}
\end{figure}

With each of the lattices of Figure~\ref{figlatticeexamples}
we may associate a bond percolation model as follows.
Let $G = (V,E)$ be a countable connected graph. A \textit{configuration} on
$G$ is an element $\omega=(\omega_e\dvtx  e \in E)$ of the set $\Om=\{
0,1\}^E$.
An edge with endpoints $u$, $v$ is denoted $uv$.
The edge $e$ is called \textit{open}, or $\omega$-\textit{open},
in $\omega\in\Om$ (resp., \textit{closed}) if $\omega_e=1$
(resp., $\omega_e = 0$).

For $\omega\in\Om$ and $A,B \in V$, we say $A$ \textit{is connected
to} $B$ (in $\omega$),
written $A \leftrightarrow B$ (or $A\displaystyle\mathop{\xleftrightarrowt}^{G,\omega} B$),
if $G$ contains a path of open edges from $A$ to $B$.
An \textit{open cluster} of $\omega$ is a maximal set of
pairwise-connected vertices.
We write $A \lra\oo$ if $A$ is the endpoint of an infinite open
self-avoiding path.

The simplest bond percolation model on $G$ is that associated with the
product measure
$\PP_p$ on $\Om$ with given intensity $p\in[0,1]$. Let $0$ be a
designated vertex
of~$V$ called the \textit{origin}, and define the
\textit{percolation probability}
\[
\th(p) = \PP_p(0 \lra\infty).
\]
The \textit{critical probability} is given by
\[
\pc(G) = \sup\{p\dvtx  \th(p)=0\}.
\]

It was proved in~\cite{Kesten80} that the square lattice has critical
probability
$\frac12$, and the principle ingredient of the proof is the property
of self-duality.
The dual of the triangular lattice is the hexagonal lattice, and a
further ingredient is
required in order to compute the two corresponding critical
probabilities, namely,
the so-called star--triangle transformation. This calculation was
performed in~\cite{Wierman81}.
General accounts of percolation may be found in \cite
{GrimmettPercolation,GrimmettGraphs},
and of aspects of two-dimensional percolation in~\cite{BolRio}.

We turn now to inhomogeneous percolation on the three lattices of
Figure~\ref{figlatticeexamples}.
The edges of the square lattice are naturally divided into two classes
(horizontal and vertical) of parallel edges,
while those of the triangular and hexagonal lattices may be split into
three such classes.
In \textit{inhomogeneous} percolation, one allows the product measure to
have different intensities on different edges, while requiring that any
two parallel edges have the same intensity.
Thus, inhomogeneous percolation on the square lattice has two parameters,
$\ph$ for horizontal edges and $\pv$ for vertical edges.
We denote the corresponding measure $\PP_{\bp}^{\square}$ where $\bp
=(\ph,\pv)$.\vadjust{\goodbreak}
On the triangular and hexagonal lattices, the measure is defined by a
triplet of parameters $\bp=(p_0, p_1, p_2)$,
and we denote these measures $\PP_{\bp}^\tri$ and $\PP_{\bp}^\hex
$, respectively.

These models have percolation probabilities and critical surfaces, and
the latter were
given explicitly in~\cite{GrimmettPercolation,Kestenbook}.
Let
\begin{eqnarray}
\kappa_{\square}(\bp) &=& \ph+\pv-1,\qquad \bp=(\ph,\pv), \\
\kappa_\tri(\bp) &=& p_0+p_1+p_2 - p_0 p_1 p_2 - 1, \qquad\bp
=(p_0,p_1,p_2),\label{criticaltriangular}\\
\kappa_\hex(\bp) &=& -\kappa_\tri(1-p_0,1-p_1,1-p_2),\qquad \bp
=(p_0,p_1,p_2).
\end{eqnarray}
The following theorem was predicted
in~\cite{SykesEssam}, and discussed in~\cite{Kestenbook}, Section~3.4,
where part (a) was proved and
examples presented in support of parts (b) and (c).
The complete proof of the theorem may be found in~\cite{GrimmettPercolation}, Section~11.9.

\begin{thmm} \label{inhomogeneouscriticality}
The critical surfaces of inhomogeneous bond percolation are given as follows:
\begin{longlist}[(a)]
\item[(a)]\textit{Square lattice}: $\kappa_{\square}(\bp)=0$.
\item[(b)]\textit{Triangular lattice}: $\kappa_\tri(\bp)=0$.
\item[(c)]\textit{Hexagonal lattice}: $\kappa_\hex(\bp)=0$.
\end{longlist}
\end{thmm}

For each of these three processes, the radii and volumes of open clusters
have exponential tails when $\kappa(\bp)<0$, as in~\cite{GrimmettPercolation}, Theorems 5.4, 6.75.

We call a triplet $\bp=(p_0, p_1, p_2)\in[0,1)^3$ \textit{self-dual}
if it satisfies
$\kappa_\tri(\bp)=0$.
By Theorem~\ref{inhomogeneouscriticality},
self-dual points are also critical points, but the neutral
term ``self-duality'' is chosen in order to emphasize that the methods of
this paper do not make use
of criticality \textit{per se\/}.

We write $\a\pm\bp$ for the triplet $(\a\pm p_0,\a\pm p_1,\a\pm p_2)$,
and also $\NN=\{1,2,\dots\}$ for the natural numbers, $\ZZ_0=\NN
\cup\{0\}$, and $\ZZ=\{\dots,-1,0,1,\dots\}$
for the integers.

%s1.3 #&#
\subsection{Main results}\label{sectresults}

We concentrate here on the probabilities of open crossings of boxes.
Consider bond percolation
on a connected planar graph $G$ embedded in the plane $\RR^2$.
(For definiteness, we assume that the embedding is
planar, and the edges of $G$ are embedded
as straight line-segments.)
Let $h,l>0$, and let $S_{h,l}= [0,h]\times[0,l]$,
viewed as a subset of $\RR^2$. A \textit{box $S$ of size} $h\times l$ is
a subset of $\RR^2$ of the form $f(S_{h,l})$ for some map $f\dvtx \RR^2
\to\RR^2$
comprising a rotation and a translation.
The box $S=f(S_{h,l})$ is said to \textit{possess open crossings} if
there exist two open paths of $G$ (viewed as
arcs in the plane) whose intersection with $S$ are arcs having one endpoint
in each of the sets $f(\{0\}\times[0,l])$ and $f(\{h\}\times[0,l])$
[resp.,
each of the sets $f([0,h]\times\{0\})$ and $f([0,h]\times\{l\})$].

\begin{definition}\label{defbxp}
Let $G=(V,E)$ be a countable connected graph embedded in the plane.
We say that a measure\vadjust{\goodbreak} $\PP$ on $\Omega=\{0,1\}^E$ has the \textit
{box-crossing property}
if, for $\alpha>0$, there exists $\delta>0$ such that
for all large $N \in\NN$ and any box $S$ of size $\alpha N \times N$,
$S$ possesses
open crossings with probability at least $\delta$.
\end{definition}

Note that the box-crossing property depends on the embedding. The
lattices considered
here will
be embedded in $\RR^2$ in very simple ways that will of themselves
cause no difficulty
in the current context.
For the sake of definiteness at this point, the square lattice
is embedded in $\RR^2$ with vertex-set~$\ZZ^2$, the triangular lattice
has vertex-set $\{a\bi+ b \bj\dvtx  a,b \in\ZZ\}$ where
$\bi=(0,2)$ and $\bj=(1,\sqrt3)$, and the hexagonal lattice is the
dual of the triangular lattice
(with vertices at the circumcenters
of triangles). Note that the box-crossing property is invariant under
affine maps of~$\RR^2$.
It is standard that homogeneous bond percolation on the square lattice
with parameter $p=\frac12$ has the box-crossing property.
(See, e.g.,~\cite{GrimmettPercolation}, Section~11.7 and
Proposition~\ref{RSW}.)

Here is a simplified version of our main result;
more general versions may be found at
Theorems~\ref{generaltriangularRSW} and~\ref{generalsquareRSW}.

\begin{thmm} \label{mainresult}
\begin{longlist}[(a)]
\item[(a)] If $\bp\in(0,1)^2$ satisfies $\kappa_{\square}(\bp)=0$,
then $\PP_{\bp}^{\square}$ has the box-crossing property.
\item[(b)] If $\bp\in[0,1)^3$ satisfies $\kappa_\tri(\bp)=0$,
then both $\PP_{\bp}^\tri$ and $\PP_{1-\bp}^\hex$
have the box-crossing property.
\end{longlist}
\end{thmm}

Square-lattice percolation may be obtained from triangular-lattice
percolation by setting
one of its three parameters to $0$. Thus, part (b) includes part (a).
We choose
to distinguish the two cases, since this is in harmony with the method
of proof.

It is shown in Section~\ref{sectcriticalityusingRSW}
(see Propositions~\ref{sub-criticalityviaRSW}--\ref{propsuper})
that, if a percolation process and its dual process
both possess the box-crossing property, then they are critical. This
provides an
alternative proof
of Theorem~\ref{inhomogeneouscriticality} via Theorem~\ref{mainresult}.
There are several possible choices for the details of the proof, but
the core
arguments comprise the combination of box-crossings, together
with positive association and some type of sharp-threshold statement.
These arguments are extended to a wider class of models next.

%s1.4 #&#
\subsection{Highly inhomogeneous models}\label{sechim}

The inhomogeneous models possess translation-invariance but not
rotation-invariance, a
gap that may be spanned by the box-crossing property. Full
translation-invariance is in
fact inessential, and our
arguments may be applied to certain ``highly inhomogeneous models'' that
we describe next.
The following is included as
a demonstration of the use of the box-crossing property, and of the
connection between
the current
work and the geometry of isoradial graphs
(see also~\cite{Ken02}).
This connection will be developed
in a future paper by the current authors.\vadjust{\goodbreak}

%f2 #&#
\begin{figure}

\includegraphics{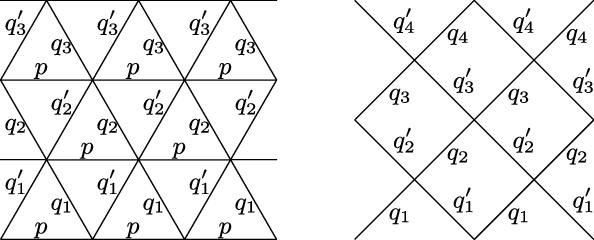}

\caption{\textit{Left:} The triangular lattice with the highly
inhomogeneous product measure $\PP^\triangle_{p, \q, \q'}$.
The probability for each edge to be open is described in the picture:
all horizontal edges have probability~$p$ of being open,
while the other edges have probability $q_n$ (right edges of upward
pointing triangles)
or $q'_n$ (left edges of upward pointing triangles) of being open,
with $n$ being their height.
\textit{Right:} The square lattice with a highly inhomogeneous product
measure $\PP^\square_{\q, \q'}$,
rotated by $\pi/4$.
Edges inclined at angle $\pi/4$ have probability $q_n$ of being open,
while edges inclined at angle $3 \pi/4$ have probability $q'_n$ of
being open,
with $n$ being their height.}
\label{fighighlyinhomogeneousmeasures}
\end{figure}

Let $p \in(0, 1)$, and let $\q= (q_n\dvtx n \in\ZZ) \in[0,1]^\ZZ$ and
$\q' = (q_n' \dvtx  n \in\ZZ) \in[0,1]^\ZZ$.
These are the parameters of our highly inhomogeneous models, and one
such parameter is
allocated to any given edge
of the square and triangular lattices. This is illustrated in Figure
\ref{fighighlyinhomogeneousmeasures}.
This family of models contains
a number of rather natural processes, two examples of which are
presented in Figure~\ref{fighighly}.

%f3 #&#
\begin{figure}[b]

\includegraphics{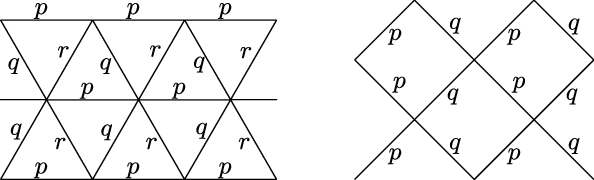}

\caption{Each graph, when repeated periodically, constitutes a model
included within the analysis of highly inhomogeneous models.
\textit{Left:} A triangular-lattice model with a
horizontal axis of symmetry, as in~\cite{Kestenbook}, Section~2.1.
\textit{Right:} A variant on the square-lattice model.}
\label{fighighly}
\end{figure}

Consider first the triangular lattice, and write $\PP^\tri_{p,\q, \q'}$
for the product measure on~$\Om$ under which
any horizontal edge is open with probability $p$ and any right (resp.,
left) edge
of an upward pointing triangle
is open with probability $q_n$ (resp., $q_n'$).
Here, $n\in\ZZ$ denotes the height of the edge as drawn in the figure.
Let $\PP^\hex_{1-p, 1-\q, 1-\q'}$ be the measure on the hexagonal
lattice that is dual to
$\PP^\tri_{p, \q, \q'}$.

Consider next the square lattice. The measure $\PP^{\square}_{\q, \q'}$
is defined similarly
to the above, as in Figure~\ref{fighighlyinhomogeneousmeasures}.
We refer to the three probability measures thus defined as \textit{highly inhomogeneous}.
Under suitable conditions on their parameters,
each may be regarded as percolation on an isoradial graph with edge-parameters
chosen in the canonical way according to the isoradial embedding (see
\cite{Ken02}, Section~5).

We may show, under suitable conditions, that highly inhomogeneous
models have the box-crossing property.

\begin{thmm}\label{generaltriangularRSW}
If $p\in(0,1)$, $\q,\q'\in[0,1]^\ZZ$ satisfy
%
%e1 #&#
\begin{equation}
\forall n \in\ZZ \qquad\kappa_\tri(p,q_n,q_n')=0, \label
{criticaltriangular2}
\end{equation}
then both $\PP^\triangle_{p, \q, \q'}$ and $\PP^{\hexagon}_{1-p,
1-\q, 1-\q'}$ have the box-crossing property.
\end{thmm}

\begin{thmm}\label{generalsquareRSW}
Let $\q,\q'\in(0,1)^\ZZ$. If there exists $\eps> 0$ such that
%
%e2 #&#
\begin{equation}
\forall n \in\ZZ\qquad \kappa_{\square}(q_n,q_n')=0\quad \mbox{and}\quad
q_n, q_n' \ge\varepsilon, \label{criticalsquare2}
\end{equation}
then $\PP^\square_{\q, \q'}$ has the box-crossing property.
\end{thmm}

The reader is reminded in Propositions~\ref{sub-criticalityviaRSW}
and~\ref{propsuper}
that the box-crossing property implies power-law behavior of the
radius of an open cluster.

An assumption along the lines of the
second condition on $\q$, $\q'$ in \eqref{criticalsquare2} is
necessary: if, for example, $q_n$
is sufficiently small over an interval of values of $n$,
then the chance of crossing a certain diagonally oriented rectangle is
correspondingly small, and thus the box-crossing property could not hold.
From the above theorems may be obtained a characterization of the
critical surface of a highly inhomogeneous model.
We call a percolation measure $\PP$ \textit{uniformly supercritical} if
there exists $\theta>0$
such that $\PP(v \lra\oo) \ge\theta$ for every vertex $v$. The
open cluster at a vertex $v$,
written $C_v$, is the set of vertices
joined to $v$ by open paths.

\begin{thmm}\label{generaltriangularcriticality}
Let $p \in(0,1)$ and $\q,\q' \in[0,1)^\ZZ$.
\begin{longlist}
\item[(a)] If
\begin{equation}
\forall n \in\ZZ\qquad \kappa_\tri(p,q_n,q_n') \leq0,
\end{equation}
then there exists, $\PP^\triangle_{p, \q, \q'}$-a.s., no infinite
open cluster.
\item[(b)]
If there exists $\delta> 0$ such that
\begin{equation}\label{G17}
\forall n \in\ZZ\qquad \kappa_\tri(p,q_n,q_n') \le-\delta,
\end{equation}
then there exist $c,d>0$ such that, for every vertex $v$,
\[
\PP^\tri_{p,\q,\q'}(|C_v| \ge k) \le ce^{-dk},\qquad  k \ge0.
\]
\item[(c)] If there exists $\delta> 0$ such that
\begin{equation}\label{generaltriangularsuper-criticalitycondition}
\forall n \in\ZZ\qquad \kappa_\tri(p,q_n,q_n') \geq\delta,
\end{equation}
then $\PP^\triangle_{p, \q, \q'}$ is uniformly supercritical.
\end{longlist}
The same holds for $\PP^{\hexagon}_{p,\q,\q'}$ with $\kappa_\hex$
in place of $\kappa_\tri$.
\end{thmm}

\begin{thmm}\label{generalsquarecriticality}
Let $\q,\q' \in(0,1)^\ZZ$.
\begin{longlist}
\item[(a)]
If there exists $\eps> 0$ such that
%
%e3 #&#
\begin{equation}
\forall n \in\ZZ\qquad\kappa_{\square}(q_n,q_n')\le0\quad \mbox{and}\quad
q_n, q_n' \le1-\varepsilon, \label{G15}
\end{equation}
then there exists, $\PP^\square_{\q, \q'}$-a.s., no infinite open cluster.
\item[(b)]
If there exists $\de> 0$ such that
\[
\forall n \in\ZZ\qquad \kappa_{\square}(q_n,q_n')\le-\de, %
\]
then there exist $c,d>0$ such that, for every vertex $v$,
\[
\PP^{\square}_{\q,\q'}(|C_v| \ge k) \le ce^{-dk},\qquad  k \ge0.
\]
\item[(c)] If there exists $\delta> 0$ such that, for all $n$,
\[
\kappa_{\square}(q_n,q_n' ) \ge\delta,
\]
then $\PP^\square_{\q, \q'}$ is uniformly supercritical.
\end{longlist}
\end{thmm}

Parts (c) of the above two theorems give conditions under which there
exists an infinite open
cluster. Such a cluster is necessarily (almost surely) unique since the
box-crossing property implies the
existence of open cycles in annuli.

We will prove Theorem~\ref{mainresult} in Section~\ref{sectmainpf},
and Theorems~\ref{generaltriangularRSW}--\ref{generalsquarecriticality}
in Section~\ref{sectgeneralizations}.

%s2 #&#
\section{Star--triangle transformation}\label{secttransformation}
%s2.1 #&#
\subsection{The basic transformation}\label{sectsimpletransformation}

The star--triangle transformation was discovered first in the context
of electrical networks,
and adapted by Onsager and Kramers--Wannier to the Ising model. In its
base form, it is
a graph-theoretic transformation between the hexagonal lattice and the
triangular lattice.
Its importance stems from the fact that a variety of probabilistic
models are
conserved under this transformation, including the critical
percolation, Potts and random-cluster
models. The methods of this paper extend to all such systems, but we
concentrate here on
percolation, for which we summarize its manner of operation as in~\cite{GrimmettPercolation}, Section~11.9.

%f4 #&#
\begin{figure}

\includegraphics{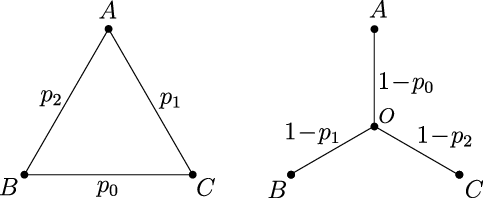}

\caption{The star--triangle transformation.}
\label{figstartriangletransformation}
\end{figure}

Consider the triangle $G=(V,E)$
and the star $G'=(V',E')$ drawn in Figure \ref
{figstartriangletransformation}. Let $\bp=(p_0,p_1,p_2)$.
In order to eliminate trivialities,\vadjust{\goodbreak} we shall assume throughout this
paper that $\bp\in[0,1)^3$.
Write $\Om=\{0,1\}^E$ with associated product probability measure $\PP
_\bp^\tri$,
and $\Om'=\{0,1\}^{E'}$ with associated measure $\PP_{1-\bp}^\hex$.
Let $\omega\in\Om$ and
$\omega'\in\Om'$.
For each graph we may consider open connections between its vertices,
and we abuse notation by writing, for example, $x \displaystyle\mathop{\xleftrightarrowt}^
{G,\omega} y$ for the \textit{indicator
function} of the event that $x$ and $y$ are connected by an open path
of $\omega$.
Thus, connections in $G$ are described by the family
$( x \displaystyle\mathop{\xleftrightarrowt}^{G , \omega} y\dvtx  x,y \in V)$ of random variables,
and similarly for $G'$.

%pr1 #&#
\begin{prop}[(Star--triangle transformation)] \label{simplestartriangle}
Let $\bp\in[0,1)^3$ be self-dual in the sense that $\kappa_\triangle
(\bp)=0$.
The families
\[
( x \mathop{\xleftrightarrowt}^{G, \omega} y \dvtx  x,y = A,B,C),\qquad
( x \mathop{\xleftrightarrowt}^{G', \omega'} y \dvtx x,y = A,B,C),
\]
have the same law.
\end{prop}

The proof may be\vspace*{1pt} found in~\cite{GrimmettPercolation}, Section~11.9.
It will be helpful in the following to explore natural couplings of the
two measures of the
proposition. Let $\bp\in[0,1)^3$ be self-dual, and let $\Om$ (resp., $\Om'$)
have associated measure $\PP_\bp^\tri$ (resp., $\PP_{1-\bp
}^\hex$)
as above. There exist random mappings $T\dvtx \Om\to\Om'$ and
$S\dvtx  \Om'\to\Om$ such that $T(\omega)$ has law $\PP_{1-\bp}^\hex
$, and
$S(\omega')$ has law $\PP_\bp^\tri$. Such mappings are given in
Figure~\ref{figsimpletransformationcoupling},
and we shall not specify them more formally here. Note from
the figure that $T(\omega)$ is
deterministic for seven of the eight elements of~$\Om$; only in the
eighth case does $T(\omega)$ involve
further randomness. Similarly, $S(\omega')$ is deterministic except
for one special $\omega'$.
Each probability in the figure is well defined since
$P := (1-p_0)(1-p_1)(1-p_2)>0$.

%f5 #&#
\begin{figure}

\includegraphics{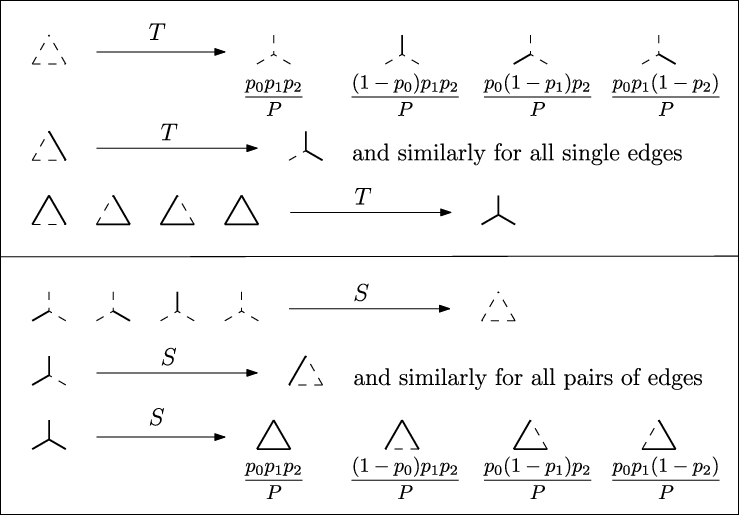}

\caption{The ``kernels'' $T$ and $S$ and their
transition probabilities, with $P:=(1-p_0) (1-p_1)\times (1-p_2)$.
Since $\kappa_\tri(\bp)=0$,
the probabilities in the first and last rows sum to $1$.}
\label{figsimpletransformationcoupling}
\end{figure}

%pr2 #&#
\begin{prop}[(Star--triangle coupling)]\label{propst-coupling}
Let $\bp$ be self-dual and let $S$ and~$T$ be given as in Figure \ref
{figsimpletransformationcoupling}.
With $\omega$ and $\omega'$ sampled as above:
\begin{longlist}
\item[(a)]
$T(\omega)$ has the same law as $\omega'$,
\item[(b)]$S(\omega')$ has the same law as $\omega$,
\item[(c)] for $x,y \in\{ A,B,C \}$, $x \displaystyle\mathop{\xleftrightarrowt}^{G,\omega} y$
if and only if $x \displaystyle\mathop{\xleftrightarrowi}^{G',T(\omega)} y$,
\item[(d)] for $x,y \in\{ A,B,C \}$, $x \displaystyle\mathop{\xleftrightarrowt}^{G',\omega'} y$
if and only if $x \displaystyle\mathop{\xleftrightarrowi}^{G,S(\omega')} y$.\vadjust{\goodbreak}
\end{longlist}
\end{prop}

%s2.2 #&#
\subsection{Transformations of lattices}

We show next how to use the star--triangle transformation to transform
the triangular lattice into the square lattice.
This transformation may be extended
to transport self-dual measures on the first lattice to measures on the
second lattice,
via a coupling that preserves open connections.
This permits the transportation of the box-crossing property from one
lattice to the other.
This general approach was introduced by Baxter and Enting~\cite{Baxter399}
in a study of the Ising model, and has since been developed under the
name Yang--Baxter equation,
\cite{McCoybook,Perk-AY}.

Henceforth, it is convenient to work with so-called \textit{mixed
lattices} that combine the square
lattice with either the triangular or hexagonal lattice. We shall be
precise about the manner
in which a mixed lattice is embedded in $\RR^2$.
Let $i\in\RR$, and let $I=\RR\times\{i\}$ be the horizontal line of
$\RR^2$
with \textit{height} $i$, called the \textit{interface};
above $I$ consider the triangular lattice and below $I$ the square lattice.
Our triangular lattice comprises equilateral triangles with side length
$\sqrt{3}$, and
our square lattice comprises rectangles whose horizontal (resp.,
vertical) edges have length~$\sqrt{3}$
(resp.,~$1$), as illustrated in the leftmost diagram of
Figure~\ref{figlatticetransformation}. The embedding is
specified
up to horizontal translation and, in order to be precise, we assume
that the point $(0,i)$ is a vertex of the lattice.
We call the ensuing graph the \textit{mixed triangular lattice}
$\Lattice$ with interface $I=I_\Lattice$.

%f6 #&#
\begin{figure}

\includegraphics{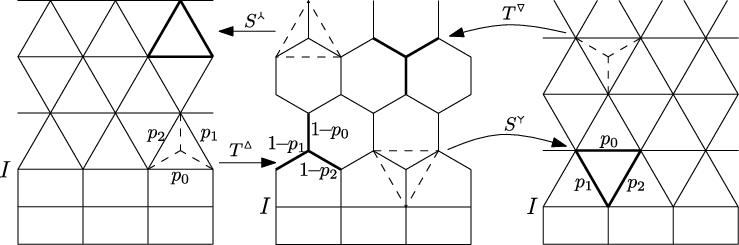}

\caption{Transformations $\Su$, $\Sd$, $\Tu$ and $\Td$ of mixed lattices.
The transformations map the zones with dashes to
the bold triangles/stars.
The interface-height decreases by $1$ from the leftmost to the
rightmost graph.}
\label{figlatticetransformation}
\end{figure}

The \textit{mixed hexagonal lattice} $\Lattice$ with interface
$I=I_\Lattice$ is similarly composed of
a regular hexagonal lattice (of side length $1$) above $I$
and a square lattice below $I$ (with edge-lengths as above), as drawn
in the central diagram
of Figure~\ref{figlatticetransformation}.

We define the \textit{height} $h(A)$ of a subset $A \subseteq\RR^2$
as the supremum of the $y$-coordinates of elements of $A$.
A mixed lattice $\Lattice$ may be identified with the subset of $\RR
^2$ belonging to its edge-set.
Thus, for a mixed lattice $\Lattice$, $h(I_\Lattice)$ is the height
of its interface.

We next define two transformations$, \Tu$ and $\Td$ acting on a mixed
triangular lattice $\Lattice$:
\begin{longlist}[(a)]
\item[(a)]$\Tu$ transforms all upward pointing triangles of $\Lattice$
into stars,
with centers at the circumcenters of the equilateral triangles.
\item[(b)]$\Td$ transforms all downward pointing triangles into stars.
\end{longlist}
It is easily checked (and illustrated in Figure \ref
{figlatticetransformation})
that each transformation maps a mixed triangular lattice
to a mixed hexagonal lattice.

We define similarly the transformations $\Su$ and $\Sd$ on a mixed
hexagonal lattice;
these transform all upward (resp., downward) pointing stars into
triangles.
They transform a mixed hexagonal lattice to a mixed triangular lattice.

The concatenated operators $\Su\comp\Td$ and $\Sd\comp\Tu$ map
the mixed triangular lattice $\Lattice$
to another mixed triangular lattice, but with a different interface height:
\begin{eqnarray*}
h( I_{\Su\comp\Td\Lattice} ) &=& h(I_\Lattice) + 1, \\
h( I_{\Sd\comp\Tu\Lattice} )& =& h(I_\Lattice) - 1.
\end{eqnarray*}
Loosely speaking, repeated application of $\Su\comp\Td$
transforms $\Lattice$ into the square lattice,
while repeated application of $\Sd\comp\Tu$ transforms it into the
triangular lattice.

We now extend the domains of the above maps to include configurations.
Let $\Lattice=(V,E)$ be a mixed triangular lattice with $\Om_E=\{0,1\}^E$,
and let $\omega\in\Om_E$. The image of $\Lattice$ under $\Tu$ is written
$\Tu\Lattice= (\Tu V, \Tu E)$ and we write $\Om_{\Tu E} = \{0,1\}
^{\Tu E}$.
Let $\bp\in[0,1)^3$.
Let $\Tu(\omega)$ be chosen (randomly) from $\Om_{\Tu E}$
by independent application of the kernel $T$ within every upward
pointing triangle of $\Lattice$.
Note that the random map $T$ depends on the choice of $\bp$.

By Proposition~\ref{propst-coupling}, for any two vertices $A$,
$B$ on $\Lattice$, we have
\begin{equation}
(A \mathop{\xleftrightarrowt}^{\Lattice,\omega} B )
\quad\Leftrightarrow\quad
\bigl(A \mathop{\xleftrightarrowi}^{\Tu\Lattice, \Tu(\omega)}B\bigr).
\label{sameconnections}
\end{equation}
The corresponding statements for $\Td$, $\Su$ and $\Sd$ are valid also,
with one point of note.
In applying the transformations $\Su$, $\Sd$ to a mixed hexagonal lattice,
the points $A$ and $B$ in the corresponding versions of \eqref
{sameconnections} must not be centers of
transformed stars,
since these points disappear during the transformations.

Let $\bp=(p_0,p_1,p_2)\in[0,1)^3$ be self-dual, and let $\Su$, $\Sd
$, $\Tu$, $\Td$ be given accordingly.
We identify next the probability measures on the mixed lattices that
are preserved
by the operation of these transformations.

Let $\Lattice=(V,E)$ be a mixed (triangular or hexagonal) lattice. The
probability measure denoted
$\PP_\bp$ on $\Om_E$ is a product measure whose intensity $p(e)$ at
edge $e$ is given as follows:
\begin{longlist}[(a)]
\item[(a)]$p(e) = p_0$ if $e$ is horizontal,
\item[(b)]$p(e) = 1- p_0$ if $e$ is vertical,
\item[(c)]$p(e) = p_1$ if $e$ is the right edge of an upward pointing triangle,
\item[(d)]$p(e)=p_2$ if $e$ is the left edge of an upward pointing triangle,
\item[(e)]$p(e) = 1-p_2$ if $e$ is the right edge of an upward pointing star,
\item[(f)]$p(e)=1-p_1$ if $e$ is the left edge of an upward pointing star.
\end{longlist}
When it becomes necessary to emphasize the lattice $\Lattice$ in
question, we shall write~$\PP_\bp^{\Lattice}$.

%pr3 #&#
\begin{prop}\label{P-invariant}
If $\bp\in[0,1)^3$ is self-dual in that $\kappa_\tri(\bp)=0$, then
$\PP_\bp$ is
preserved by the transformations $\Su$, $\Sd$, $\Tu$k and $\Td$.
That is,
if $U$ is any of these four transformations acting on the mixed lattice
$\Lattice=(V,E)$, then
\[
\mbox{$\omega\in\Om_E$ has law $\PP_\bp^\Lattice$}\quad
\Leftrightarrow\quad\mbox{$U(\omega)$ has law
$\PP_\bp^{U\Lattice}$}.
\]
\end{prop}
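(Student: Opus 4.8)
The plan is to reduce Proposition~\ref{P-invariant} to the local statement contained in Proposition~\ref{prop:st-coupling}, the only subtlety being that we must check that the edge-intensities prescribed by the list (a)--(f) are exactly those that make the local kernels $T$ and $S$ applicable, and that the transformation acts on disjoint pieces of the lattice so that independence is preserved.

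First I would treat the case $U=\Tu$ acting on a mixed triangular lattice $\Lattice=(V,E)$. The key observation is that the upwards pointing triangles of $\Lattice$ are pairwise edge-disjoint, and every edge of $E$ lying strictly above the interface $I_\Lattice$ belongs to exactly one such triangle; the edges on or below $I_\Lattice$ are untouched by $\Tu$ and are simply copied to $\Tu\Lattice$. Hence, writing $E$ as the disjoint union $\bigsqcup_\Delta E_\Delta \sqcup E_0$ over the upwards triangles $\Delta$ (with $E_0$ the square-lattice edges), the measure $\PP_\bp^\Lattice$ factorizes as a product over the $E_\Delta$ and over $E_0$. By item (iii), (iv) of the list, the restriction of $\PP_\bp^\Lattice$ to a single triangle $E_\Delta$ assigns intensities $(p_0,p_1,p_2)$ to its (horizontal, right, left) edges, which is precisely the law of $\om$ in Proposition~\ref{prop:st-coupling}. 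Applying the kernel $T$ independently in each $\Delta$, Proposition~\ref{prop:st-coupling}(i) gives that the image within the corresponding star $\Tu E_\Delta$ has law $\PP_{1-\bp}^{\hex}$ restricted to that star, i.e.\ intensities $(1-p_0,1-p_2,1-p_1)$ on its (vertical, right, left) edges --- which is exactly what items (ii), (v), (vi) demand on $\Tu\Lattice$. Since the $\Tu E_\Delta$ are again pairwise disjoint and disjoint from the untouched $E_0$, the product structure is preserved, so $\Tu(\om)$ has law $\PP_\bp^{\Tu\Lattice}$; the converse implication follows because $T$ can be inverted in law by $S$ via Proposition~\ref{prop:st-coupling}(ii). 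This is the crux, and I expect the bookkeeping of which list-entry matches which edge after transformation to be the main (though entirely routine) obstacle --- one has to be careful that in $\Tu\Lattice$ the image stars are \emph{upwards pointing} stars, so that entries (v), (vi) and not some reflected version apply.

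The remaining three transformations are handled by the same argument with cosmetic changes. For $\Td$ one uses that the downwards pointing triangles of a mixed triangular lattice are edge-disjoint and cover exactly the above-interface edges; note that the transformed objects are \emph{downwards} pointing stars, and one must verify (by the symmetry built into the list (a)--(f), or by an explicit relabelling of $\bp$) that the resulting intensities again match those prescribed by $\PP_\bp$ on the mixed hexagonal lattice. For $\Su$ and $\Sd$ one runs the identical decomposition on a mixed hexagonal lattice, now grouping the edges into upwards (resp.\ downwards) pointing stars, and applies the kernel $S$ of Figure~\ref{fig:simple_transformation_coupling} in each star; Proposition~\ref{prop:st-coupling}(ii), (i) then yield the two implications. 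In every case self-duality $\kappa_\tri(\bp)=0$ is used solely through Proposition~\ref{prop:st-coupling}, which is where it is genuinely needed (it guarantees $P=(1-p_0)(1-p_1)(1-p_2)>0$ and that the transition probabilities in the first and last rows of Figure~\ref{fig:simple_transformation_coupling} sum to $1$).

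To summarize the structure: (1) decompose the edge-set of the mixed lattice into the triangles/stars being transformed together with the untouched square-lattice part; (2) observe that $\PP_\bp$ is a product over this decomposition; (3) apply the single-cell coupling of Proposition~\ref{prop:st-coupling} cell-by-cell, independently; (4) check that the output intensities on the image lattice coincide with those listed in (a)--(f) for $\PP_\bp^{U\Lattice}$; (5) invoke independence across cells to conclude the law is the full product measure $\PP_\bp^{U\Lattice}$, and use the reverse kernel for the converse. No infinite-volume issue arises because the transformations act locally and the claim is about the law of the full configuration, which is determined by its finite-dimensional marginals.
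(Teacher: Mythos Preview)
Your proposal is correct and is precisely the argument the paper has in mind; the paper in fact gives no proof of Proposition~\ref{P-invariant} at all, treating it as an immediate consequence of the local coupling of Proposition~\ref{prop:st-coupling} applied independently in each triangle or star. Your write-up simply makes explicit the decomposition and the bookkeeping that the paper leaves to the reader, so there is nothing to compare beyond noting that your level of detail exceeds the paper's (one minor quibble: for $\Tu$ the horizontal edges lying \emph{on} the interface are bases of the lowest upward triangles and are transformed, not left untouched, but this does not affect the argument).
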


%s2.3 #&#
\subsection{Transformations of paths}

Since the star--triangle transformation preserves open connections
[cf. \eqref{sameconnections}], there is a sense in which it maps
open paths to open paths.\vadjust{\goodbreak}
Thus, if percolation on a mixed lattice $\Lattice$ has the
box-crossing property, one expects
that its image also has the box-crossing property. Some difficulties occur
in the proof of this, arising from the fact that the image of an open path
tends to drift away from the original. We study this drift next. It is
convenient to work with general paths in $\RR^2$.

Recall that a \textit{path} $\Ga=(\Ga_t)$ in $\RR^2$ is a
continuous function
$\Ga\dvtx [a,b] \to\RR^2$ for some real interval $[a,b]$. Note that a path
$\Ga$ may in general have self-intersections, and there may be
subintervals of
$[a,b]$ on which $\Ga$ is constant. Let
$\phi\dvtx  [c,d] \to[a,b]$ be continuous and strictly increasing with
$\phi(c)=a$ and $\phi(d)=b$. We term the path $\Ga_\phi=(\Ga_{\phi
(t)})$ a \textit{reparametrization} of $\Ga$
over $[c,d]$.

Let $|\cdot|$ denote the Euclidean norm on $\RR^2$.
The space of paths may be metrized by
\[
d(\Gamma, \Pi) = \inf\Bigl\{
\sup_{t \in[0,1]} | \Gamma'_t - \Pi'_t |\Bigr\},
\]
where the infimum is over all reparametrizations $\Ga'$ (resp.,
$\Pi
'$) of $\Ga$
(resp.,~$\Pi$) over $[0,1]$.
Note that $d$ is not a metric since $d(\Ga,\Ga')=0$ if
$\Ga'$ is a reparametrization of $\Ga$, and thus the corresponding
metric acts on a space of equivalence classes of paths
(see~\cite{AizBur}, equation (2.1)).
We shall use the fact that, if two paths (parametrized
over $[0,1]$) satisfy
$d(\Gamma,\Pi) < \delta$, then
\[
\Gamma\subseteq\Pi^{\delta},\qquad
| \Gamma_0 - \Pi_0| \leq\delta,\qquad
| \Gamma_1 - \Pi_1 | \leq\delta,
\]
where
\[
A^\de:= \{x+y\dvtx  x \in A, |y| \le\de\}.
\]

Henceforth, all paths will be lattice-paths (we allow loops and
repeated edges).
Such a path is called \textit{open} (in a given configuration) if it
traverses only
open edges.

Let $\omega$ be an edge-configuration on a mixed triangular lattice
$\Lattice$.
Let $\Ga$ be an $\omega$-open lattice-path of $\Lattice$, and
consider the action of the map
$\Tu$ (illustrated in Figure~\ref{figpathtransformation}).
The image lattice $\Tu\Lattice$ is endowed with
the edge-configuration $\Tu(\omega)$, and we
explain next the construction of a $\Tu(\omega)$-open path $\Tu
(\Gamma)$ on $\Tu\Lattice$.
The path $\Tu(\Gamma)$ will remain close to $\Ga$,
and it will depend only locally on $\Gamma$ and $\omega$.

We summarize the argument for $\Tu(\Ga)$ [the same argument is valid
for $\Td(\Ga)$].
The path $\Ga$ passes through the sequence $\g_0,\g_1,\dots,\g_m$
of vertices
of $\Lat$, in order. Since $\Lat$ is a mixed triangular lattice, each
$\g_i$ is
present in $\Tu\Lat$ also. The edge $\g_i\g_{i+1}$ of $\Lat$ lies
either in
its square part (excluding the interface) or its triangular part
(including the interface). If the former, it lies also in $\Tu\Lat$.
If the latter, it lies in a unique upward pointing triangle $t$ of
$\Lat$. Under $\Tu$,
$t$ is mapped to a star $\Tu(t)$, and the configuration on $t$ is
mapped to a configuration on
$\Tu(t)$ in which $\g_i$ is connected by an open path to $\g_{i+1}$
via the center $O$ of the star.
We replace the edge $\g_i\g_{i+1}$ of $\Lat$ by this open path.
This is done for each edge of $\Ga$, and the outcome, denoted $\Tu
(\Ga)$, is an open path
of $\Tu\Lat$ with the same endpoints as $\Ga$.\vadjust{\goodbreak}

We turn now to a mixed hexagonal lattice $\LatticeH$ under the
transformation $\Sd$
(the same argument holds for $\Su$).
Let $\omega$ be an edge-configuration
on $\LatticeH$, and $\Ga$ an open path. Readers may be content with
the illustration
of Figure~\ref{figpathtransformation}, but further details are given below:

%f7 #&#
\begin{figure}

\includegraphics{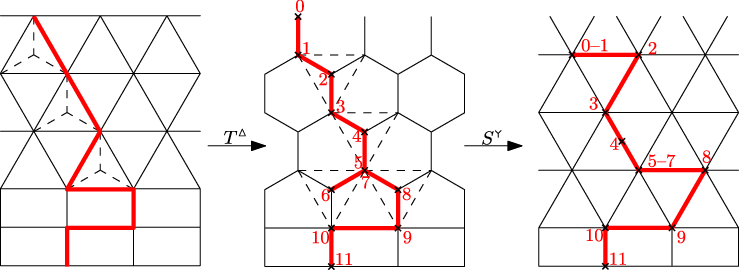}

\caption{Transformations of lattice-paths.
The transformation $\Tu$ acts deterministically on open paths,
each edge of a triangle being transformed into two segments of an
upward pointing star.
When applying $\Sd$,
the segment labeled from $0$ to $1$ contracts to one point,
as does that labeled from $5$ to $7$.}
\label{figpathtransformation}
\end{figure}

We parametrize $\Gamma$ as $(\Ga_t\dvtx  0 \le t \le N)$ in such
a way that:
\begin{longlist}[(a)]
\item[(a)]$(\Gamma_n \dvtx  n =0,1, \dots,N)$ are the vertices visited by
$\Gamma$
in sequence (possibly with repetition),
\item[(b)] each $\Gamma_{[n, n+1]}$ is either an edge or a vertex of the
lattice, and
\item[(c)]$\Gamma$ is affine on the intervals $[n,n+1]$.
\end{longlist}

It suffices to define the image under $\Sd$ of each edge of $\Ga$.
For simplicity,
we assume that $\Ga$ has no stationary points; the argument is exactly
similar otherwise.
Let~$g_n$ be the edge $\Ga_{[n,n+1]}$ of $\LatticeH$.
If $g_n$ lies in the square part of $\LatticeH$ (i.e., in or below
the interface), we set
$\Sd(\Ga)_n =\Ga_n$ and $\Sd(\Ga)_{n+1} = \Ga_{n+1}$, with linear
interpolation
between.

Suppose that $g_n$ lies in the hexagonal part of $\LatticeH$, so that
$g_n$ is an edge
of a downward pointing star whose exterior vertices we denote as $A$,
$B$, $C$, and
whose central vertex as $O$. Thus, $g_n$ has $O$ as one endvertex, and
its other endvertex
lies in $\{A, B, C\}$.
\begin{enumerate}[(a)]
\item[(a)] If $n = 0$ and $\Ga_0=O$,
set $\Sd(\Gamma)_{[0,1]} = \Gamma_1$.
\item[(b)] If $n = N-1$ and $\Ga_N=O$,
set $\Su(\Gamma)_{[N-1,N]} = \Gamma_{N-1}$.
\item[(c)] Suppose $n \ge1$ and $\Ga_n = O$ (a similar argument holds if
$n \le N-2$ and $\Ga_{n+1} =O$).
Then $\Ga_{n-1}, \Ga_{n+1} \in\{A,B,C\}$.
\begin{itemize}
\item If $\Gamma_{n-1} = \Gamma_{n+1}$, set $\Sd(\Gamma)_{[n-1,
n+1]} = \Gamma_{n-1}$.
\item If $\Gamma_{n-1} \neq\Gamma_{n+1}$
and the edge $\Gamma_{n-1} \Gamma_{n+1}$ is open in $\Sd(\omega)$, set
$\Sd(\Gamma)_{n-1} = \Gamma_{n-1}$ and $\Sd(\Ga)_{n+1} =\Gamma
_{n+1}$, with linear interpolation between.\vadjust{\goodbreak}
\item Suppose $\Gamma_{n-1} \neq\Gamma_{n+1}$
and the edge $\Gamma_{n-1}\Gamma_{n+1}$ is closed in $\Sd(\omega)$,
and let $C$ denote the third exterior vertex of the star in question.
The edges $\Ga_n C$ and $\Ga_{n+1}C$ are necessarily open in $\Sd
(\omega)$.
Set $\Sd(\Gamma)_{n-1} =\Gamma_{n-1}$,
$\Sd(\Gamma)_{n} = C$,
$\Sd(\Gamma)_{n+1} = \Gamma_{n+1}$,
with linear interpolation between.
\end{itemize}
\end{enumerate}

%pr4 #&#
\begin{prop}\label{distancebetweenpaths}
Let $\Gamma$ be a path of a mixed lattice. We have that:
\begin{longlist}[(a)]
\item[(a)]$d(\Gamma, \Tu(\Gamma)) \leq\frac12$ and $d(\Gamma, \Td
(\Gamma)) \leq\frac12$,
\item[(b)]$d(\Gamma, \Su(\Gamma)) \leq1$ and $d(\Gamma, \Sd(\Gamma))
\leq1$,
\item[(c)]$d(\Gamma, (\Su\comp\Td) (\Gamma)) \leq1$ and $d(\Gamma,
(\Sd\comp\Tu)(\Gamma)) \leq1$,
\end{longlist}
whenever the transformations are matched to the mixed lattice.
\end{prop}

\begin{pf} %[Proof of Proposition~\ref{distancebetweenpaths}]
This follows by examination of the cases above, and is illustrated
in Figure~\ref{figpathtransformation}.
\end{pf}

%s3 #&#
\section{\texorpdfstring{Proof of Theorem \protect\ref{mainresult}}{Proof of Theorem 1.3}}\label{sectmainpf}

%s3.1 #&#
\subsection{The box-crossing property}\label{srecbxp}
Whereas the box-crossing property of Definition~\ref{defbxp} involves
crossing of
boxes with arbitrary orientations,
it is in fact necessary and sufficient that boxes with sides parallel
to the axes
possess horizontal and vertical open
crossings with probabilities bounded away from $0$.

Let $\Lattice=(V,E)$ be a lattice embedded in the plane, and let
$\omega\in\Om_E = \{0,1\}^E$. Let $\Ch(m,n)$
[resp., $\Cv(m,n)$] be the event
that there is an open horizontal (resp., vertical) crossing
of the box $B_{m,n} = [ - m , m] \times[0,n
]$ of $\RR^2$.
Suppose now that~$\Lat$ is invariant under translation by the nonzero
real vectors $(a,0)$
and $(0,b)$ for some least positive $a$ and $b$.
A probability measure $\PP$ on $\Om_E$ is called \textit
{translation-invariant}
if it is invariant under the actions of these translations.
It is said
to be \textit{positively associated} if
\[
\PP(A \cap B) \ge\PP(A) \PP(B)
\]
for all increasing events $A$, $B$ (see~\cite{GrimmettRCM}, Section~2.2).
By the Harris--FKG inequality (see~\cite{GrimmettPercolation}, Section~2.2),
product measures are positively associated.

%pr5 #&#
\begin{prop}\label{RSW}
A translation--invariant, positively associated probability
measure $\PP$ on $\Om_E$
has the box-crossing property if and only if the following
hold:
\begin{longlist}[(a)]
\item[(a)] for $\alpha>0$, there exists $\delta>0$ such that, for all
large $N \in\NN$,
\begin{equation}
\PP[ \Ch(\alpha N, N)] > \delta, \label{horizontalcrossing}
\end{equation}
\item[(b)] there exist $\beta, \delta>0$ such that, for all large $N \in
\NN$,
\begin{equation}
\PP[ \Cv(N, \beta N)] > \delta\label{verticalcrossing}.
\end{equation}
\end{longlist}
\end{prop}

It is standard that $\PP^{\square}_{1/2,1/2}$ satisfies the
above conditions and
therefore
has the box-crossing property. See~\cite{Russo,Seymour-Welsh}, and the
accounts in
\cite{GrimmettPercolation}, Section~11.7
and~\cite{GrimmettGraphs}, Section~5.5.

%f8 #&#
\begin{figure}

\includegraphics{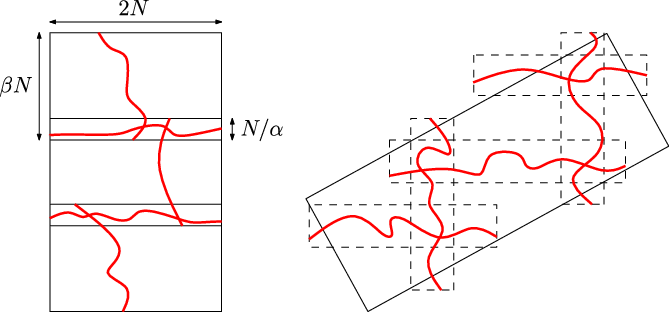}

\caption{\textit{Left:} Vertical crossings of copies of $B_{N,\beta N}$
and horizontal crossings of copies of $B_{N, N/\a}$, for large $\a$,
may be combined to obtain vertical crossings of boxes with arbitrary
aspect ratio.
\textit{Right:} Crossings of the type $\Ch(\g n,n)$ and $\Cv(n,\g n)$
may be combined to obtain crossings of boxes with general
inclination.}
\label{figcombiningcrossings}
\end{figure}

\begin{obs}\label{rem0}
If the measure $\PP$ of Proposition~\ref{RSW} is not
translation--invariant, the
proposition remains valid with \eqref{horizontalcrossing}--\eqref
{verticalcrossing}
replaced by the same inequalities \textit{uniformly} for all translates
of the relevant rectangles.
\end{obs}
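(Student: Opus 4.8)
The plan is to reduce the claim to an inspection of the proof of Proposition \ref{RSW}, locating the one point at which translation-invariance of $\PP$ is genuinely used and checking that the uniform hypotheses supply exactly what is needed to replace it. The forward implication requires no change: if $\PP$ has the \bxp, then for every $\alpha>0$ the rectangles $B_{\alpha N,N}$ and $B_{N,\beta N}$ (with $\beta=1$, say), together with \emph{all} their translates, are boxes of fixed aspect ratio in the sense of Definition \ref{def:bxp}, so the corresponding horizontal and vertical open crossings have probability at least some $\delta=\delta(\alpha)>0$, uniformly in the translate and for all large $N\in\NN$; this is precisely the uniform-over-translates form of \eqref{horizontal_crossing}--\eqref{vertical_crossing}.

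For the converse, I would revisit the RSW-type argument that, in the proof of Proposition \ref{RSW}, produces the \bxp\ from \eqref{horizontal_crossing}--\eqref{vertical_crossing}. That argument relies on positive association throughout --- via the square-root trick and the gluing of overlapping rectangle-crossings --- and it invokes translation-invariance of $\PP$ for a single, localised purpose only: to equate the probability of an open crossing of a given axis-parallel rectangle with that of an arbitrary translate of it. More concretely, one first amplifies the two prescribed aspect ratios to all aspect ratios by gluing translated copies of rectangles, and then realises an open crossing of an arbitrary box $S$ of size $\alpha N\times N$ and arbitrary orientation as the intersection of open crossings of a bounded number $K=K(\alpha)$ of axis-parallel rectangles whose union is a ``staircase corridor'' lying inside $S$ and joining its two short sides; positive association then bounds the probability of this intersection below. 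Every rectangle that appears in this scheme is, up to translation and the common scaling $N\mapsto cN$, drawn from a finite list of shapes depending only on $\alpha$ and on the (fixed) combination scheme. Hence, if the uniform versions of \eqref{horizontal_crossing}--\eqref{vertical_crossing} are assumed in place of translation-invariance, then each crossing event used in the scheme carries the corresponding uniform lower bound, and the identical chain of positive-association estimates yields the \bxp, with $\delta$ depending only on $\alpha$ and not on the position of $S$. Positive association remains a standing hypothesis; for the product measures relevant here it holds, irrespective of translation-invariance, by the Harris--FKG inequality.

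The main --- and essentially the only --- obstacle is the bookkeeping underlying the penultimate sentence above: one must verify that the construction used in the proof of Proposition \ref{RSW} really does involve only a family of rectangle-crossing events that is finite up to translation and the overall scaling, so that finitely many applications of the uniform bounds suffice to cross each target box, with all constants independent of its position. This is a matter of rereading that construction; no new idea is required, and in particular no symmetry of $\PP$ beyond positive association is used.
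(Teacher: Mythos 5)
Your proposal is correct and follows essentially the same route as the paper: the paper's (sketched) proof of Proposition \ref{RSW} glues crossings of translated rectangles via positive association (as in Figure \ref{fig:combining_crossings}), and translation-invariance enters only to give the same lower bound for each translate, so replacing it by the uniform-over-translates form of \eqref{horizontal_crossing}--\eqref{vertical_crossing} leaves the argument intact. Your observation that the rectangles used form a finite family up to translation (for fixed aspect ratio and inclination), together with Harris--FKG for the gluing, is exactly the content of the paper's implicit justification of the remark.
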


\begin{pf} %[Proof of Proposition~\ref{RSW}]
This is sketched.
It is trivial that the box-crossing property implies~\eqref
{horizontalcrossing} and
\eqref{verticalcrossing}.
Conversely, suppose \eqref{horizontalcrossing} and \eqref{verticalcrossing}
hold.
The positive association
permits the combination of box-crossings
to obtain crossings of larger boxes.
The claim is now obtained as illustrated in Figure \ref
{figcombiningcrossings}.
\end{pf}

%s3.2 #&#
\subsection{\texorpdfstring{Proof of Theorem \protect\ref{mainresult}}{Proof of Theorem 1.3}} \label{secmainprop}

Theorem~\ref{mainresult} is an immediate consequence of the following
theorem. Recall that a triplet $\bp\in[0,1)^3$
is \textit{self-dual} if $\kappa_\tri(\bp)=0$, with~$\kappa_\tri$
given in
\eqref{criticaltriangular}.

\begin{thmm} \label{bxp}
Let $\bp= (p_0,p_1,p_2) \in[0,1)^3$ be self-dual.
\begin{longlist}[(a)]
\item[(a)]
If $\PP_{(p_0, 1-p_0)}^\square$ has the box-crossing property,
then so does $\PP_{\bp}^\triangle$.
\item[(b)]
Let $p_0>0$. If $\PP_{\bp}^\triangle$ has the box-crossing property,
then so does
$\PP_{(p_0, 1-p_0)}^\square$.
\item[(c)]
$\PP_{\bp}^\triangle$ has the box-crossing property if and only if
$\PP_{1-\bp
}^{\hexagon}$ has it.
\end{longlist}
\end{thmm}

Since $\PP_{(1/2, 1/2)}^\square$ has the box-crossing property,
we have by Theorem~\ref{bxp}(a) that
$\PP_{(1/2, p_1, p_2)}^\triangle$
has the box-crossing property for all self-dual triplets $(\frac12,
p_1, p_2)$.
As $(\frac12, p_1, p_2)$ ranges within the set of self-dual triplets,
$p_1$ ranges over
the interval $[0,\frac12]$.
By Theorem~\ref{bxp}(b),
for all $p_1 \in(0,\frac12)$, $\PP_{(p_1, 1-p_1)}^\square$ has the
box-crossing property.
We then use Theorem~\ref{bxp}(a) again to deduce that
$\PP_{\bp}^\triangle$ has the box-crossing property for all self-dual triplets $\bp$.
Finally, the conclusion may be extended to
the hexagonal lattice by Theorem~\ref{bxp}(c).

Theorem~\ref{bxp}(a, b) is proved in the remainder of this section.
Part (c) is an immediate
consequence of a single application of the star--triangle transformation,
and no more will be said about this. We assume henceforth that all
lattices are
embedded in $\RR^2$ in the style of Figure~\ref{figlatticetransformation}.

%s3.3 #&#
\subsection{\texorpdfstring{Proof of Theorem \protect\ref{bxp}\textup{(a)}}
{Proof of Theorem 3.3(a)}}\label
{sectproofsquaretotriangle}

It suffices to assume $p_0>0$, since the hypothesis does not hold when $p_0=0$.
By Proposition~\ref{RSW},
it suffices to prove the following two propositions.

%pr6 #&#
\begin{prop} \label{horizontalRSWsquaretotriangle}
Let $\bp=(p_0,p_1,p_2)\in[0,1)^3$ be self-dual with $p_0>0$.
For $\alpha> 1$ and $N \in\NN$,
\[
\PP_\bp^\triangle\bigl[\Ch\bigl((\alpha-1) N,2N \bigr)\bigr] \geq\PP_{(p_0,
1-p_0)}^\square[\Ch(\alpha N, N )].
\]
\end{prop}

%pr7 #&#
\begin{prop} \label{verticalRSWsquaretotriangle}
Let $\bp=(p_0,p_1,p_2) \in[0,1)^3$ be self-dual with $p_0>0$.
There exist $\beta=\b(p_0) > 0$, and $\rho_N = \rho_N(\b)>0$
satisfying $\rho_N\to1$ as $N \to\infty$,
such that
\[
\PP_\bp^\triangle[\Cv( 2N ,\beta N)] \geq
\rho_N \PP_{(p_0,1-p_0)}^\square[\Cv(N, N)],\qquad N\in\NN.
\]
\end{prop}

The constant $\beta$ is given by
%
%e4 #&#
\begin{equation}\label{G12}
\b:=\frac{1-\sqrt{1-p_0(1-p_0)}}{1-p_0},
\end{equation}
and $\rho_N=\rho_N(\b)$
may be calculated explicitly by the final argument of this subsection.

\begin{pf*}{Proof of Proposition \protect\ref{horizontalRSWsquaretotriangle}}
Let $\bp\in[0,1)^3$ be self-dual with $p_0>0$, and let $\alpha> 1$
and $N \in\NN$.
Let $\Lat= (V,E)$ be a mixed triangular lattice with interface-height
$h(I_\Lattice) = N$,
and write $\PP_\bp$ for the associated product measure on $\Lattice$.
Since $ B_{\alpha N, N} = [ - \alpha N , \alpha N ] \times
[ 0, N ]$
is beneath the interface,
\[
\PP_{(p_0, 1-p_0)}^\square[\Ch(\alpha N, N ) ] = \PP
_\bp^{\Lattice}[\Ch(\alpha N, N )].
\]

Let $\omega\in\Ch(\alpha N, N )$. We claim that
there exists a horizontal open crossing of
$B_{(\alpha-1) N, 2N}$ in $(\Sd\comp\Tu)^N (\omega)$,
as illustrated in Figure~\ref{fighorizontalRSWsquaretotriangle}.

%f9 #&#
\begin{figure}

\includegraphics{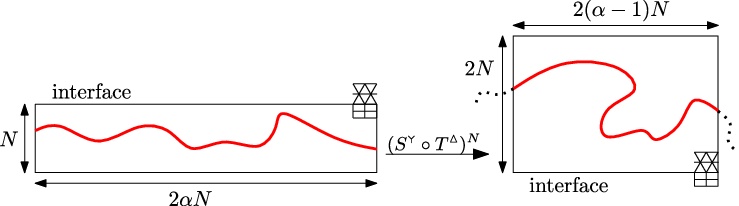}

\caption{Transformation of a horizontal crossing of $B_{\alpha N, N}$
by $(\Sd\comp\Tu)^N$.
The interface moves down $N$ steps.
The path drifts by at most distance $N$ and cannot go below the interface
of the image lattice.}
\label{fighorizontalRSWsquaretotriangle}
\end{figure}

Let $\Gamma$ be an open path of $\Lat$, parametrized by $[0,1]$, that crosses
$B_{\alpha N, N}$ horizontally.
By Proposition~\ref{distancebetweenpaths},\vadjust{\goodbreak}
$d(\Gamma, \Ga(N)) \leq N$ where $\Ga(N):=(\Sd\comp\Tu)^N (\Gamma
)$, whence,
\begin{eqnarray}
&| \Gamma_0 - \Ga(N)_0 | \leq N ,& \label{leftendpoint}\\
&| \Gamma_1 - \Ga(N)_1 | \leq N ,& \label
{rightendpoint}\\
&\Ga(N) \subseteq\Gamma^{N} \subseteq B_{\alpha N, N} ^{N}.&
\end{eqnarray}
Since $\Ga$ contains no vertex with strictly negative $y$-coordinate,
neither does $\Ga(N)$.
Hence,
\[
\Ga(N) \subseteq\Gamma^{N}
\cap\{(x,y) \in\RR^2 \dvtx  y \geq0 \} \subseteq
\RR\times[ 0 , 2N ] .
\]
Taken with \eqref{leftendpoint}--\eqref{rightendpoint}, we deduce that
$\Ga(N)$ contains an open path $\Gamma'$ that crosses
$B_{(\alpha-1) N, 2N}$ in the horizontal direction.

Since
$B_{(\alpha-1) N, 2N}$ lies entirely in the triangular part of $(\Sd
\comp\Tu)^N \Lattice$,
we have by Proposition~\ref{P-invariant} that
\begin{eqnarray*}
\PP_\bp^{\Lattice}[\Ch(\alpha N, N )]
&\leq&\PP_\bp^{(\Sd\comp\Tu)^N \Lattice}\bigl[\Ch\bigl((\alpha-1)
N, 2N \bigr)\bigr] \\
&=& \PP_{\bp}^\triangle\bigl[\Ch\bigl((\alpha-1) N, 2N \bigr)\bigr],
\end{eqnarray*}
and the proposition is proved.
\end{pf*}

\begin{pf*}{Proof of Proposition \protect\ref{verticalRSWsquaretotriangle}}
Consider the box $B_{N,N}$ in the mixed triangular lattice $\Lattice$
with interface-height $h(I_\Lattice) = N$. We follow the strategy of the
previous proof by considering the action of $\Sd\comp\Tu$ on a
vertical open crossing
$\Ga$ of
the box. In $N$ applications of $\Sd\comp\Tu$, the lattice
within the box is transformed
from square to triangular. By Proposition~\ref{distancebetweenpaths}(c),
the image of $\Ga$ may
drift by distance 1 or less at each step.
Drift of $\Ga$ in the horizontal direction can be accommodated
within a box, that is, wider in that direction.
Vertical drift is, however, more troublesome.
Whereas the lower endpoint of $\Ga$ is
unchanged by $N$ applications of $\Sd\comp\Tu$, its upper
endpoint may be reduced in height by $1$ at each such application.
If this were to occur at every application,
both endpoints of the final path would be on the $x$-axis.
This possibility will be controlled by proving that the downward velocity
of the upper endpoint is strictly less than $1$.

Let $\bp\in[0,1)^3$ be self-dual with $p_0>0$,
and write $\Lat^k = (\Sd\comp\Tu)^k\Lat$ for $0 \le k\le N$. The
lattice $\Lat^k$ has edge-set $E^k$ and configuration space
$\Om^k=\{0,1\}^{E^k}$. Let $\PP_\bp^k$ denote the probability
measure on $\Om^k$ given before Proposition~\ref{P-invariant}.
Recall from
that proposition that $\Sd\comp\Tu$
acts as a \textit{random} mapping from $\Om^k$ to $\Om^{k+1}$,
via the ``kernel'' given in Figure~\ref{figsimpletransformationcoupling}.
We shall assume that sequential applications of this kernel
are independent of one another and of the choice of initial configuration.
More specifically, let $(\omega^k\dvtx  k \ge0)$ satisfy the following:
\begin{longlist}[(a)]
\item[(a)]$\omega^k$ is a random configuration from $\Om^k$,
\item[(b)] the sequence $(\omega^k\dvtx  k \ge0)$ has the Markov property,
\item[(c)] given $\{\omega^0,\omega^1,\dots,\omega^k\}$,
$\omega^{k+1}$ may be expressed as $\omega^{k+1} = \Sd\comp\Tu
(\omega^k)$,
\item[(d)] the law of $\omega^0$ is $\PP_\bp^0$.
\end{longlist}
Let $\PP$ denote the joint law of the sequence $(\omega^0,\omega^1,
\dots)$.
By Proposition~\ref{P-invariant}, the law of $\omega^k$ is $\PP_\bp^k$.

Let $D^k = B_{N+k,\oo} = [-N-k,N+k] \times[0,\infty)$ be viewed
as a subgraph of~$\Lat^k$,
and call the line $\RR\times\{0\}$ the \textit{base} of $\RR^2$.
We shall work with the sequence $(h^k\dvtx  1\le k \le N)$
of random variables given by
\[
h^k := \sup\{h\dvtx  \exists x_1,x_2 \in\RR\mbox{ with } (x_1,0)
\mathop{\xleftrightarrowt}^{D^k, \omega^k}
(x_2,h)\}.
\]
Note that $h^k$ acts on $\Om^k$.

Since $\Lat^N$ is entirely triangular in the upper half-plane,
it suffices to show the existence of $\rho_N=\rho_N(\b) > 0 $ such
that $\rho_N\to1$ and
%
%e5 #&#
\begin{equation}\label{G1}
\PP( h^N \geq\b N ) \geq\rho_N \PP( h^0 \geq N ),
\end{equation}
with $\beta$ as in \eqref{G12}.
The remainder of this subsection is devoted to proving this.

\begin{lemma}\label{G-lem1}
For $0\le k < N$, the following two statements hold:
\begin{eqnarray}
h^{k+1} &\geq& h^k - 1, \label{limiteddecrease} \\
\PP\bigl(h^{k+1} \geq h+\tfrac12 \vert h^k =h\bigr) &\geq&
\b,\qquad  h \ge0.
\label{chanceofnondecrease}
\end{eqnarray}
\end{lemma}

\begin{pf}
We may assume that $h^k<\oo$ for $0 \le k \le N$, since the converse has
zero probability. Let $k<N$, and
let $\Gamma^k=\Ga^k(\omega^k)$ be the leftmost path in $D^k$ that
reaches some point at height $h^k$.
By Proposition~\ref{distancebetweenpaths}(c),
$\Lat^{k+1}$ possesses an open vertical
crossing of $B_{N+k+1,h^k-1}$, so that $h^{k+1}\ge h^k-1$. Inequality
\eqref{limiteddecrease} is proved, and we turn to \eqref
{chanceofnondecrease}.

Let $0\le k < N$, and let
$\sG$ be the set of all paths $\g$ of
$\Lat^k$ such that there exists $h>0$ with the following:
\begin{longlist}[(a)]
\item[(a)] all vertices of $\g$ lie in $B_{N+k,h}$,
\item[(b)]$\g$ has one endpoint (denoted $\g_0$) in $\RR\times\{0\}$,
\item[(c)] its other endpoint (denoted $\g_1$) lies in $\RR\times\{h\}$.
\end{longlist}
For $\g\in\sG$, there is a unique such $h$, denoted $h(\g)$.

Let $\g\in\sG$, and let $L(\g)$ be the closed subregion of
$[-N-k,N+k]\times[0,h(\g)] \subseteq\RR^2$ lying ``to the left''
of $\g$. Let $\sG(\g)$ be the subset of $\sG$ containing all paths
$\g'$ with $h(\g')=h(\g)$ and $\g'
\subseteq L(\g)$. We write $\g' < \g$ if $\g' \subseteq L(\g)$ and
$\g'\ne\g$.

%f10 #&#
\begin{figure}

\includegraphics{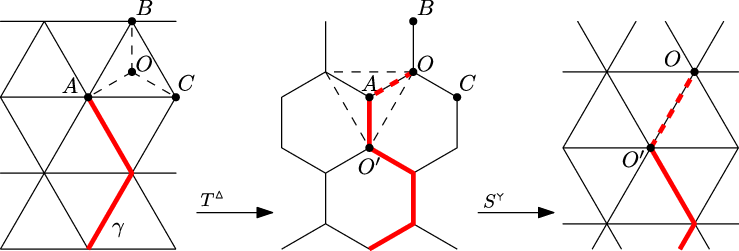}

\caption{An illustration of the action of $\Sd\circ\Tu$ when $\Ga
^k=\g$.
The top endpoint $A$ of $\g$ is preserved under $\Tu$. If $\omega^k(BC)=0$,
there is a strictly positive probability that $AO$ is open in $\Tu
(\omega^k)$,
in which case $h^{k+1} \ge h^k + \frac12$.}
\label{figchanceofnondecrease}
\end{figure}

Suppose that $p_1 \le p_2$.
The endpoint $\g_1$ is the lower \textit{left} corner of some upward
pointing triangle
denoted $ABC = ABC(\g)$, where $A = \g_1$ and $O$ is its center.
If $p_2>p_1$, we work instead with the similar triangle of which $\g
_1$ is the
lower \textit{right} corner, and the ensuing argument is exactly similar.
See Figure~\ref{figchanceofnondecrease}.

We claim that
%
%e6 #&#
\begin{equation}\label{G4}
\PP( BC \mbox{ is $\omega^k$-closed} \vert\Gamma^k =\g)
\geq1-p_1,\qquad \g\in\sG.
\end{equation}
Since the marginal of $\PP$ on $\Om^k$ is $\PP_\bp^k$, it suffices
to show that
%
%e7 #&#
\begin{equation}\label{G3}
\PP_\bp^k ( BC \mbox{ closed} \vert \Gamma^k =\g)
\geq1-p_1,\qquad \g\in\sG.
\end{equation}
This is proved as follows.
Let $\g\in\sG$. Then $\{\Ga^k = \g\} = F \cap G \cap\{\g\mbox{
open}\}$,
where~$F$ is the event that there exists no $\g'< \g$ such that every
edge of
$\g'\setminus\g$ is open,
and~$G$ is the event that there exists no $\g''\in\sG$ with $h(\g
'')>h(\g)$ and every
edge of $\g''\setminus\g$~is open. Since $F \cap G$
is a decreasing event that is independent of the states of edges in $\g$,
we have by the positive association of $\PP_\bp^k$ that
\begin{eqnarray*}
\PP_\bp^k(\Ga^k=\g\vert  BC \mbox{ closed})
&= &\PP_\bp^k(\g\mbox{ open}) \PP_\bp^k(F \cap G \vert  BC \mbox{
closed})\\
&\ge&\PP_\bp^k(\g\mbox{ open})\PP_\bp^k(F \cap G) = \PP_\bp
^k(\Ga^k=\g).
\end{eqnarray*}
Therefore,
\begin{eqnarray*}
\PP_\bp^k ( BC \mbox{ closed} \vert \Gamma^k =\g) &=&
\PP_\bp^k(\Ga^k=\g\vert  BC \mbox{ closed})
\frac{\PP_\bp^k(BC \mbox{ closed})}{\PP^k_\bp(\Ga^k = \g)}\\
&\ge&\PP_\bp^k(BC \mbox{ closed}) = 1-p_1,
\end{eqnarray*}
and \eqref{G4} is proved.

Consider the state of the edge $AO$ in the configuration $\Tu(\omega^k)$.
By Figure~\ref{figsimpletransformationcoupling},
for any $\omega\in\Om^k$ with $\omega(BC)=0$,
\[
\PP_\bp^k \bigl( AO \mbox{ open in }\Tu(\omega) | \omega
^k =\omega\bigr)
\geq\frac{p_0 p_2}{(1-p_0)(1-p_2)}.
\]
It follows that
\[
\PP\bigl(h^{k+1} \ge h^k +\tfrac12 | \omega^k=\omega\bigr)
\ge\frac{p_0 p_2}{(1-p_0)(1-p_2)} 1_{\{\omega(BC)=0\}}, \qquad\omega
\in\Om^k,
\]
where $1_H$ denotes the indicator function of an event $H$. Recall that
$BC=BC(\Ga^k(\omega))$.
Therefore, for $\g\in\sG$,
\begin{eqnarray*}
\PP\bigl(h^{k+1}\ge h^k+\tfrac12 | \Ga^k = \g\bigr)
&\ge&
\frac{p_0 p_2}{(1-p_0)(1-p_2)}\PP(\omega^k(BC) = 0 \vert \Ga^k=\g)\\
& \ge&\frac{(1-p_1)p_0 p_2}{(1-p_0)(1-p_2)},
\end{eqnarray*}
by \eqref{G4}.

Now $p_0$ is fixed, $p_1 \le p_2$,
and $\kappa_\tri(\bp)=0$. Hence,
the last ratio is a minimum when $p_1=p_2$, whence
\[
\frac{(1-p_1)p_0 p_2}{(1-p_0)(1-p_2)} \ge
\frac{1-\sqrt{1-p_0(1-p_0)}}{1-p_0} = \b,
\]
and the claim of the lemma follows.
\end{pf}

There are at least two ways to complete the proof of Proposition
\ref{verticalRSWsquaretotriangle}, of which one involves
controlling the mean of $h^{k+1} - h^k$. We take a second route
here, via a small standard lemma. For a real-valued discrete
random variable $X$, we write $\law(X)$ for its law,
and $\sS(X) := \{x \in\RR\dvtx  P(X=x)>0\}$ for its \textit{support}. The
inequality
$\lest$ denotes stochastic domination.

\begin{lemma}\label{Markovdomination}
Let $(X_0,X_1)$ and $(Y_0,Y_1)$ be pairs of real-valued discrete random
variables such that:
\begin{longlist}[(a)]
\item[(a)]$X_0 \lest Y_0$,
\item[(b)] for $x \in\sS(X_0)$, $y\in\sS(Y_0)$ with $x \le y$, the
conditional laws
of $X_1$ and $Y_1$ satisfy
$\law(X_1 \vert  X_0 = x) \lest\law(Y_1 \vert  Y_0 = y)$.
\end{longlist}
Then $X_1 \lest Y_1$.\vadjust{\goodbreak}
\end{lemma}

\begin{pf}
We include a proof for completeness. By Strassen's theorem  (see~\cite{Lindv}, Section~IV.1),
there exists a probability space
and two random variables $X_0'$, $Y_0'$, distributed respectively
as $X_0$ and $Y_0$, such that $P(X_0' \le Y_0') = 1$. Now,
\begin{eqnarray*}
P(X_1 > u) &=& \sum_{x \le y} P(X_1 > u \vert  X_0 = x) P(X_0' = x,
Y_0'=y)\\
&\le&\sum_{x \le y}P(Y_1 > u \vert  Y_0 = y) P(X_0' = x, Y_0'=y)\\
&=& P(Y_1 > u),
\end{eqnarray*}
where the summations are restricted to $x \in\sS(X_0)$ and $y\in\sS(Y_0)$.
\end{pf}

Let $(H^k\dvtx  k \ge0)$ be a Markov process with $H^0=h^0$ and transition
probabilities
%
%e8 #&#
\begin{equation}
P(H^{k+1} = j \vert  H^k = i) =
\cases{
\b,&$\mbox{if } \displaystyle j=i+\tfrac12,$ \vspace*{2pt}\cr
1- \b,&$\mbox{if } j=i-1,$}
\end{equation}
with $\b$ as above.
By Lemma~\ref{G-lem1} and an iterative application of Lemma \ref
{Markovdomination},
\[
\PP( h^N \geq\b N ) \geq
P( H^N \geq\b N) .
\]
Since $h^0$ and $H^0$ have the same distribution,
\begin{eqnarray*}
\frac{\PP(h^N \ge\b N)}{\PP(h^0 \ge N)} &\ge&
\frac{P( H^N \geq\b N)}{P(H^0 \geq N)}\\
&\ge& P(H^N \ge\b N \vert  H^0 \ge N)
=: \rho_N(\b).
\end{eqnarray*}
Now, $(H_k)$ is a random walk with mean step-size $-1+3\b/2$. By the
law of large numbers,
$\rho_N \to1$ as $N\to\oo$. In addition, $\rho_N >0$,
and \eqref{G1} follows.~%
\end{pf*}

%s3.4 #&#
\subsection{\texorpdfstring{Proof of Theorem \protect\ref{bxp}\textup{(b)}}
{Proof of Theorem 3.3(b)}}\label{sectprooftriangletosquare}
By Proposition~\ref{RSW}, it suffices to prove the following two propositions.

%f11 #&#
\begin{figure}

\includegraphics{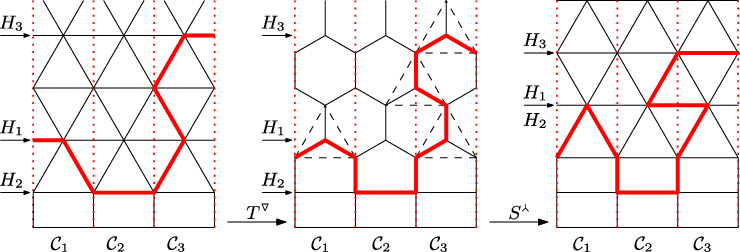}

\caption{The evolution of the heights of a crossing within columns,
when applying $\Td$
and $\Su$.
The heights in each column are the same in the first and second lattice.
In the third, $H_1$ increases by~$1$;
$H_2$ increases by $2$;
$H_3$ does not change.}
\label{fighorizontalRSWtriangletosquarepathdrift}
\end{figure}

%pr8 #&#
\begin{prop} \label{horizontalRSWtriangletosquare}
Let $\bp=(p_0, p_1, p_2)\in[0,1)^3$ be self-dual with $p_0>0$.
There exists $\beta=\b(p_0) \in\NN$ and $N_0 = N_0(p_0) \in\NN$
such that,
for $\alpha\in\sqrt3 \NN$ with $\a> \beta$, and $N \ge N_0$,
\begin{equation}\label{G14}
\PP_{(p_0, 1-p_0)}^\square\bigl[\Ch\bigl( (\alpha- \beta) N, \beta N
\bigr)\bigr]
\ge(1-\a e^{-N})\PP_\bp^\triangle[\Ch(\alpha N,N )].
\end{equation}
\end{prop}

%pr9 #&#
\begin{prop} \label{verticalRSWtriangletosquare}
Let $\bp=(p_0, p_1, p_2)\in[0,1)^3$ be self-dual.
For $\alpha> 0$ and $N \in2\NN$,
\begin{equation}
\PP_{(p_0, 1-p_0)}^\square\bigl[\Cv\bigl(\bigl( \alpha+ \tfrac12\bigr) N ,
\tfrac12 N \bigr)\bigr] \geq\PP_\bp^\triangle[\Cv(\alpha N, N)].
\end{equation}
\end{prop}

\begin{pf*}{Proof of Proposition \protect\ref{horizontalRSWtriangletosquare}}
Let $\bp$ satisfy the hypothesis of the proposition.
The idea is to consider repeated applications of the transformation
$\Su\comp\Td$
to an open horizontal crossing of a box in the triangular part of a
mixed lattice.
The interface moves upward, and the crossing may ``drift''
upward at each step. A new technique is required to control the rate
of this drift. This will be achieved by bounding the vertical
displacement of the path
by a certain growth process.

We partition the plane into vertical columns
\[
\col_n = \bigl(n\sqrt{3}, (n+1)\sqrt{3} \bigr) \times\RR,\qquad
n \in\ZZ,
\]
of width $\sqrt3$.
Let $\Lat=(V,E)$ be a mixed lattice, and $\omega\in\Om_E$.
The $\col_n$ correspond to the columns of the square sublattice of
$\Lat$,
as illustrated in
Figure~\ref{fighorizontalRSWtriangletosquarepathdrift}.

For any (parametrized) open path $\Lambda=(\La_t\dvtx  a\le t \le b)$ on
$\Lat$, let
\[
H_n(\Lambda) = \sup\{ h(\Lambda_t) \dvtx  t \mbox{ such that }
\Lambda_t \in\col_n \}
\]
be its \textit{height} in $\col_n$. (The supremum of
the empty set is taken to be $-\oo$.)
Note that $h(\La)= \sup_n H_n(\La)$.
The growth of the $H_n(\La)$ may be bounded as follows under the action
of the random map $\Su\comp\Td$.

For future use, we define $\eta\dvtx (0,1) \to(0,1)$ by
%
%e9 #&#
\begin{equation}\label{G13}
\eta(x) = \bigl(1+x-\sqrt{1-x+x^2}\bigr)^2,
\end{equation}
and note that $\eta$ is increasing.

\begin{lemma} \label{columngrowthcontrol}
Let $\Lat$ be a mixed triangular lattice, and let $\omega$, $\La$ be
as above.
There exists
a family of independent Bernoulli random variables $(Y_n \dvtx\break n \in\ZZ)$
with parameter $1-\eta(p_0)$,
such that, for all $n\in\ZZ$,
\[
H_{n}\bigl((\Su\comp\Td)(\Lambda)\bigr) \leq\max
\{ H_{n-1}(\Lambda),H_n(\Lambda) + Y_n , H_{n+1}(\Lambda)
\}.
\]
\end{lemma}

We delay the proof of this lemma until later in this subsection.

Let $\Lat^0=(V,E)$ be the mixed triangular lattice
with interface-height\break $h(I_{\Lat^0}) = 0$,
and let $\omega^0\in\Om_E$. Let
$\a\in\sqrt3 \NN$, and let $\Ga^0$ be an open path of $\Lat^0$ in
the box
$B_{\a N,N}$. We
shall use the notation introduced at the start of the proof of
Proposition~\ref{verticalRSWsquaretotriangle},
with the difference that the transformation $\Sd\circ\Tu$ there is
replaced here by $\Su\circ\Td$.
Thus, $\Lat^k = (\Su\circ\Td)^k\Lat$, and $\omega^k$ is the
edge-configuration on $\Lat^k$
given by $\omega^{k} = \Su\comp\Td(\omega^{k-1})$ for
$k \ge1$. Recall that $\omega^{k}$ is
a random function of $\omega^{k-1}$ generated via the kernel of Figure
\ref{figsimpletransformationcoupling},
and we assume as before that sequential applications of this kernel are
independent. We shall
study the heights of the image paths $\Ga^k = (\Su\circ\Td)^k(\Ga^0)$.

As before, if $\omega^0$ is chosen according to $\PP_\bp^0$,
then the law of $\omega^k$ is $\PP_\bp^k$.
The law of the sequence $(\omega^k\dvtx  k \ge0)$ is written $\PP$, although
for the moment we take $\omega^0$ to be fixed and write $\PP(\cdot
\vert \omega^0)$ for
the corresponding conditional measure.

We shall show that the speed of growth of the maximal height of $\Gamma^k$
is strictly less than $1$.
This will be proved by constructing
a certain growth process that dominates (stochastically)
the family $[H_n(\Ga^k)\dvtx  n \in\ZZ, k \ge0]$.

Let $\zeta\in(0,1)$. Let $(Y_n^k\dvtx  n \in\ZZ, k \ge0)$ be a family
of independent Bernoulli random variables
with parameter $1-\zeta$. The Markov process $\bX^k:=(X_n^k\dvtx\break n \in
\ZZ)$ is given as follows:
\begin{longlist}[(a)]
\item[(a)] The initial value $\bX^0$ is given by
\[
X_n^0 =
\cases{
N, &\quad$\mbox{for } n \in\bigl[- \alpha N/\sqrt3, \alpha
N/\sqrt3\bigr],$\vspace*{2pt}\cr
-\infty,&\quad$\mbox{for } n \notin\bigl[- \alpha N/\sqrt3, \alpha N/\sqrt3\bigr].$}
\]
\item[(b)] For $k \ge0$, conditional on $\bX^k$, the vector $\bX^{k+1}$
is given by
\[
X_n^{k+1} = \max\{ X_{n-1}^k, X_{n}^k + Y_n^k , X_{n+1}^k \},\qquad n
\in\ZZ.
\]
\end{longlist}

\begin{lemma}\label{growthprocessdomination}
Let $\zeta\in(0,1)$.
There exist $\b, N_0 \in\NN$ depending on $\zeta$ only (independent of
$\a$, $N$)
such that, for $\a\in\sqrt3 \NN$ and $N \geq N_0$,
\[
P \Bigl( \max_n X_n^{\beta N} \leq\beta N \Bigr) \geq1 - \alpha
e^{ - N}.
\]
\end{lemma}

We postpone the proof of this lemma,
first completing that of Proposition~\ref{horizontalRSWtriangletosquare}.
Let $\zeta=\eta(p_0)$, and let $\b$ and $N_0$ be given as in Lemma
\ref{growthprocessdomination}.
Since $H_n(\Ga^0) \le X_n^0$ for all $n$, we have by Lemma~\ref{columngrowthcontrol}
that, given $\omega^0$, $h(\Ga^k)$ is dominated stochastically by
$\max_n X_n^k$.
By Lemma~\ref{growthprocessdomination},
\begin{equation}
\PP\bigl( h(\Gamma^{\beta N}) \leq\beta N | \omega^0
\bigr) \geq1 - \alpha e^{ - N},\qquad
N \ge N_0.
\label{overallgrowthcontrol}
\end{equation}
Since $h(I_{\Lattice^0}) = 0$ and $h(I_{\Lattice^N}) = N$,
\begin{eqnarray*}
\PP_\bp^\tri[\Ch(\alpha N,N )] &= &\PP\bigl( \omega_0 \in\Ch(
\alpha N, N ) \bigr) ,\\
\PP_{(p_0, 1-p_0)}^{\square}\bigl[\Ch\bigl( (\alpha- \beta) N, \beta N \bigr)\bigr]
&=& \PP\bigl(\omega_{\beta N} \in\Ch\bigl( (\alpha- \beta) N, \beta N
\bigr) \bigr) .
\end{eqnarray*}
Hence,
\begin{eqnarray}\label{omegaconditional}
&&\frac{\PP_{(p_0, 1-p_0)}^\square[\Ch( (\alpha- \beta) N, \beta N
)]}{\PP_p^\triangle[\Ch(\alpha N,N )]}
\nonumber
\\[-8pt]
\\[-8pt]
\nonumber
&&\qquad\geq\PP\bigl(\omega_{\beta N} \in\Ch\bigl( (\alpha- \beta) N, \beta
N \bigr) |
\omega_0 \in\Ch(\alpha N, N) \bigr).
\end{eqnarray}

Let $\omega_0 \in\Ch(\alpha N, N)$ and let $\Ga^0$ be an $\omega
^0$-open crossing of $B_{\alpha N, N}$.
By Proposition~\ref{distancebetweenpaths}, the leftmost point
of $\Gamma^{\beta N}$ lies to the left of $B_{(\alpha-\beta) N,
\beta N}$, and the rightmost point
to the right of that box.
Moreover, $\Gamma^{\beta N}$ is contained in the upper half-plane, since
the lower half-plane is in the square-lattice part of every $\Lat^k$.
If, in addition, $h(\Gamma^{\beta N}) \leq\beta N$, then
$\Gamma^{\beta N}$ contains a $\omega^{\b N}$-open horizontal
crossing of $B_{(\alpha-\beta) N, \beta N}$.
In conclusion,
\begin{eqnarray*}
&&\PP\bigl(\omega^{\beta N} \in\Ch\bigl( (\alpha- \beta) N, \beta N \bigr)
|
\omega_0 \in\Ch(\alpha N, N) \bigr) \\
&&\qquad\geq\PP\bigl( h(\Gamma^{\beta N}) \leq\beta N | \omega_0
\in\Ch(\alpha N, N) \bigr) \\
&&\qquad\geq1 - \alpha e^{ - N},\qquad  N \geq N_0,
\end{eqnarray*}
by \eqref{overallgrowthcontrol}.
The claim follows by \eqref{omegaconditional}.
\end{pf*}

\begin{pf*}{Proof of Lemma \protect\ref{columngrowthcontrol}}
We recall two properties of the transformations $\Su$ and $\Td$
when applied to an $\omega$-open path $\La$.
In constructing $\Td(\La)$, we apply $\Td$ to downward pointing triangles
of $\Lat$ containing either one or two edges of $\La$.
As illustrated in Figure~\ref{figsimpletransformationcoupling},
$\Td$ acts deterministically on such triangles, and,
hence, $\Td(\La)$ is specified by knowledge of $\La$.
By inspection of Figure~\ref{fighorizontalRSWtriangletosquarepathdrift}
or otherwise,
\begin{equation}\label{TddoesnotchangeH}
H_n(\Td(\Lambda)) = H_n(\Lambda),\qquad n \in\ZZ.
\end{equation}

The situation is less simple when applying $\Su$ to $\Td(\La)$. Let
$\sS$ be
the set of upward pointing stars of $\Td\Lat$, and let $(Z^s_\rl,
Z^s_{\rm r}\dvtx  s \in\sS)$ be independent
Bernoulli random variables with parameter
\[
\nu:= \sqrt{1-\nu_0} \qquad\mbox{where } \nu_0 := 1 - \frac
{p_1p_2}{(1-p_1)(1-p_2)}.
\]
For $s \in\sS$, let $\ul Z^s = \min\{Z_\rl^s, Z_{\rm r}^s\}$,
noting that
%
%e10 #&#
\begin{equation}\label{G9}
P(\ul Z^s=1)= \nu^2 = 1-\nu_0.
\end{equation}
We call $s \in\sS$ a \textit{horizontal star} (for $\La$) if $\Td
(\La)$
includes the two nonvertical edges of $s$.

By \eqref{TddoesnotchangeH}, any changes in the $H_n$ occur only
when applying $\Su$.
The height $H_n(\Lambda)$ may grow under the application of $\Su\circ
\Td$
for either of two reasons: (i)~the highest part of $\La$ within $\col_n$ may move upward,
or (ii) part of $\La$ in a neighboring column may drift into $\col_n$ (in
which case, we say it ``invades''~$\col_n$).
These two possibilities will be considered separately.

Let $n \in\ZZ$.
Assume first that
%
%e11 #&#
\begin{equation}\label{G6}
H_{n}(\Lambda) \leq\max\{ H_{n-1}(\Lambda), H_{n+1}(\Lambda
) \} - 1.
\end{equation}
By Proposition~\ref{distancebetweenpaths},
the part of $\Lambda$ within $\col_n$ cannot drift upward by more
than 1.
By considering the ways in which parts of $\Lambda$ may invade $\col_n$,
we find that such invasions may occur only horizontally, and not
diagonally upward (see Figure \ref
{fighorizontalRSWtriangletosquarepathdrift}).
Combining these two observations, we deduce under \eqref{G6} that
\begin{equation}
H_{n}\bigl(\Su\comp\Td(\Lambda)\bigr) \leq
\max\{ H_{n-1}(\Lambda), H_{n+1}(\Lambda) \}. \label{invasiongrowth}
\end{equation}

%f12 #&#
\begin{figure}[b]

\includegraphics{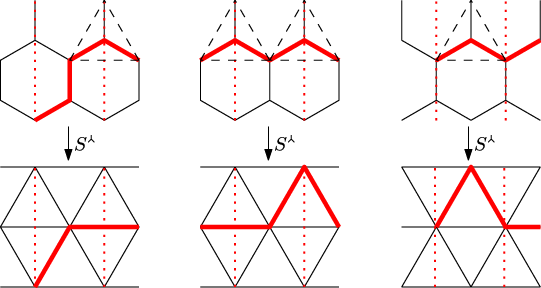}

\caption{Three examples of growth of path-height within a column
under the action of $\Su$, under the assumption
$H_{n}(\Gamma_{k}) \geq\max\{ H_{n-1}(\Gamma_{k}),H_{n+1}(\Gamma
_{k}) \}$.
\textit{Left}: The base of the marked triangle is present in the image,
and the height does not increase.
\textit{Middle}: The base of the rightmost marked triangle is absent.
The heights in the central and right columns increase.
There is a strictly positive probability that both marked bases are present,
and that the height in the central column does not increase.
\textit{Right}: The base of the marked triangle is absent, and the
height increases by $1$.}
\label{fighorizontalRSWtriangletosquarepathcolumngrowth}
\end{figure}

Suppose next that
%
%e12 #&#
\begin{equation}\label{G7}
H_{n}(\Lambda) \geq\max\{ H_{n-1}(\Lambda), H_{n+1}(\Lambda
) \}.
\end{equation}
By Proposition~\ref{distancebetweenpaths}, \eqref{TddoesnotchangeH}
and the above remark concerning invasion,
\[
H_{n}\bigl(\Su\comp\Td(\Lambda)\bigr) \leq H_{n}(\La') + 1 = H_n(\La)+1,
\]
where $\La'=\Td(\La)$. Assume that $H_{n}(\Su(\La')) = H_{n}(\La
') + 1$.
There must exist a star $s \in\sS$ such that:
\begin{longlist}[(a)]
\item[(a)]$s$ is a horizontal star for $\La$,
\item[(b)]$s$ intersects $\col_n$,
\item[(c)]$H_n(\Td(\La)) = h(O)$ where $O$ is the center of $s$,
\item[(d)] the base of $\Su(s)$ is closed in $\Su\circ\Td(\omega)$.
\end{longlist}
(See the middle and rightmost cases of Figure
\ref{fighorizontalRSWtriangletosquarepathcolumngrowth} for
illustrations.)\vadjust{\goodbreak}

Let $s$ satisfy (a), (b) and (c), and write $A$ for the highest vertex
of $s$, so that $\Td(\La)$
includes the edges $BO$ and $CO$. The edge $BC$ is open in $\Su\circ
\Td(\omega)$
with (conditional) probability
\[
\cases{
1, &\quad$\mbox{if $AO$ is closed in $\Td(\omega)$},$\vspace*{2pt}\cr
\nu_0, &\quad $\mbox{if $AO$ is open in $\Td(\omega)$}.$}
\]
(See Figure~\ref{figsimpletransformationcoupling}.)
This conditional probability is achieved by declaring $BC$ to be open
if and only if
either $AO$ is closed in $\Td(\omega)$, or $AO$ is open in $\Td
(\omega)$ and $\ul Z^s=0$.
With this coupling,
\[
\mbox{if (d) above holds, then $\ul Z^s=1$, and hence $Z^s_\rl
=Z^s_{\rm r}= 1$}.
\]

We return to \eqref{G7}.
If the highest part of $\Lambda$ in $\col_n$ comprises a single
horizontal star $s$,
as on the right of Figure \ref
{fighorizontalRSWtriangletosquarepathcolumngrowth},
\begin{equation}
H_{n}\bigl(\Su\comp\Td(\Lambda)\bigr) - H_{n}(\Lambda) \leq\max\{Z^s_\rl
, Z^s_{\rm r}\} =: Y_n. \label{verticalgrowth1}
\end{equation}
If, on the other hand, the highest part of $\Lambda$ in $\col_n$
corresponds to two stars, $s_1$ and $s_2$,
that also intersect $\col_{n-1}$ and $\col_{n+1}$, respectively (as
in the first and second diagrams of the figure),
\begin{equation}
H_{n}\bigl(\Su\comp\Td(\Lambda)\bigr) - H_{n}(\Lambda) \leq\max\{
Z^{s_1}_{\rm r}, Z^{s_2}_\rl\} =: Y_n. \label{verticalgrowth2}
\end{equation}
Recalling the properties of the $Z_\rl^s$, $Z_{\rm r}^s$, we have that the
$Y_n$ are independent Bernoulli variables with parameter $1-\eta'$, where
%
%e13 #&#
\begin{equation}\label{G11}
\eta':= \bigl(1 - \sqrt{1-\nu_0} \bigr)^2
=\Biggl (1-\sqrt{\frac{p_1p_2}{(1-p_1)(1-p_2)}}\Biggr)^2.
\end{equation}
The proof is completed by the elementary exercise of showing that $\eta
' \ge\eta(p_0)$.
 \end{pf*}

\begin{pf*}{Proof of Lemma \protect\ref{growthprocessdomination}}
The process $\bX=(\bX^k\dvtx  k \ge0)$ may be represented physically as follows.
Above each integer is a pile of bricks, illustrated in Figure~\ref{fighorizontalRSWtriangletosquaregrowthprocess}.
At each epoch of time, each column gains a random number of bricks.
If a column is at least as high as its two nearest neighboring columns,
a brick is added with probability $1-\zeta$.
Otherwise, bricks are added to the column to match the height of its
higher neighbor.

%f13 #&#
\begin{figure}

\includegraphics{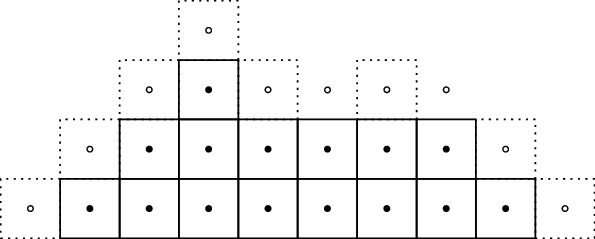}

\caption{The solid squares represent the bricks at step $k$ in the
growth process.
The dotted squares are the additions at time $k+1$.
The lateral extensions occur with probability $1$, and the
vertical extensions with probability $1-\zeta$.}
\label{fighorizontalRSWtriangletosquaregrowthprocess}
\end{figure}

We study the process via the times at which bricks are placed at vertices.
For each pair $A$, $B$ of neighbors in the upper half-plane
$\ZZ\times\ZZ_0$ of the square lattice with the usual embedding, we
place a directed edge
denoted $AB$
from $A$ to $B$, and similarly a directed edge $BA$ from $B$ to $A$.
Let $\sE$ be
the set of all such directed edges.
The random variables $(\tau_{AB}\dvtx  AB \in\sE)$ are assumed independent
with distributions as follows:
\[
\tau_{AB} =
\cases{
1, &\quad$\mbox{if $AB$ is horizontal},$\vspace*{2pt}\cr
0, &\quad$\mbox{if $AB$ is directed downward},$}
\]
and $\tau_{AB}$ has the geometric distribution with parameter $1-\zeta
$ if
$AB$ is directed upward, that is,
\[
P(\tau_{AB} = r) = \zeta^{r-1}(1-\zeta),\qquad  r \ge1.
\]
Thinking about $\tau_{AB}$ as the time for the process to pass along the
edge $AB$,
we define the \textit{passage-time} from $C$ to $D$ by
\[
\tau(C,D) = \inf\biggl\{\tau(\vec\La) := \sum_{ e\in\vec\La}
\tau_e\dvtx  \vec\La\in\sP_{C,D}\biggr\},
\]
where $\sP_{C,D}$ is the set of all directed paths from $C$ to $D$.

Let $\a\in\sqrt3\NN$ and $L_i := [-\a N/\sqrt3, \a N/\sqrt
3]\times\{i\}$.
The initial state $G_0$ of this growth process
is the set $\bigcup_{i=0}^N L_i$. It is easily seen that the
state $G_k$ at time $k$ comprises exactly the set of all vertices $D$ such
that there exists $C \in L_N$ with $\tau(C,D) \le k$.

Let $\beta> 3$ be an integer, to be chosen later. By the above,
\begin{equation}\label{G5}
P \bigl( h(G_{\beta N}) \geq\beta N \bigr)
\leq
\mathop{\sum_{C,D\dvtx }}_{C\in L_N, h(D)=\beta N} P \bigl(\tau(C,D) \leq
\beta N \bigr).
\end{equation}
Now, $\tau(C,D) \leq\beta N$ if and only if there exists
a directed path $\vec\La\in\sP_{C,D}$ with passage-time not
exceeding $\beta N$,
so that
\begin{equation}\label{boundbysum}
P \bigl( h(G_{\beta N}) \geq\beta N \bigr)
\leq
\sum_{\vec\La\in\sP_N} P\bigl(\tau(\vec\La) \le\b N\bigr),
\end{equation}
where $\sP_N$ is the set of directed paths whose endpoints $C$, $D$
are as
in \eqref{G5}.
Consider such a path $\vec\La$, and let $u$, $d$, $h$ be the numbers
of its upward, downward and horizontal edges, respectively.
Since upward and horizontal edges have passage-times at least $1$, we
must have
$u+h \le\beta N$. By considering the heights of the first and last vertices,
$u-d=(\beta-1) N$. Therefore, $\vec\La$ has no more than $(\b+1)N$ edges
in total, of which
at least $(\beta-1)N$ are upward.

There are $|L_N| \le2 \alpha N$ possible choices for $C$,
so that
\begin{equation}
|\sP_N| \le2 \alpha N 4^{2N} \pmatrix{(\beta+1)N\vspace*{2pt}\cr {2N}}. \label{boundnumberterms}
\end{equation}
For $\vec\La\in\sP_N$, $\tau(\vec\La)$ is no smaller than
the sum of the passage-times of its upward edges. Therefore,
\begin{equation}
P\bigl(\tau(\vec\La) \leq\beta N \bigr)
\leq P (S \leq\beta N),
\end{equation}
where $S$ is the sum of $(\b-1)N$ independent random variables with
the Geom$(1-\zeta)$
distribution. It is elementary that
\[
P(S \le\beta N) = P\bigl(T \ge(\b-1)N\bigr),
\]
where $T$ has the binomial distribution bin$(\b N, 1-\zeta)$. By
Markov's inequality (as in the proof of Cram\'er's theorem),
%
%e14 #&#
\begin{equation}\label{boundindividualtime}
\limsup_{N\to\oo}P\bigl(T \ge(\b-1)N\bigr)^{1/N}
\le\b\biggl(\frac{\b(1-\zeta)}{\beta-1}\biggr)^\b,
\end{equation}
when $\b(1-\zeta) < \b-1$, that is, $\b> 1/\zeta$.

By \eqref{boundbysum}--\eqref{boundindividualtime}, there exists
$N_0=N_0(\b,\zeta)$ such that, for $N \ge N_0$,
\[
P \bigl( h(G_{\beta N}) \geq\beta N \bigr)
\leq2 \alpha N 4^{2N} \pmatrix{(\beta+ 1)N\vspace*{2pt}\cr{2N}}
\biggl\{ 2\b\biggl(\frac{\b(1-\zeta)}{\beta-1}\biggr)^\b\biggr\}^N.
\]
By Stirling's formula, there exists $c=c(\zeta)$ and
$N_1=N_1(\b,\zeta)$ such that, for \mbox{$N \ge N_1$,}
\begin{equation}
P \bigl( h(G_{\b N}) \geq\b N \bigr)
\leq\a\biggl\{c\b^3\biggl(\frac{\b(1-\zeta)}{\beta-1}
\biggr)^\b\biggr\}^N .
\label{finaldominationofX}
\end{equation}
Choose $\b=\b(\zeta)$ sufficiently large that the last term is
smaller than $\a e^{-N}$,
and the proof is complete.
\end{pf*}

This concludes the proof of Lemma~\ref{growthprocessdomination}
and thus of Proposition~\ref{horizontalRSWtriangletosquare}.

\begin{pf*}{Proof of Proposition \protect\ref{verticalRSWtriangletosquare}}
Let $N \in2 \NN$.
Let $\Lattice=(V,E)$ be the mixed triangular lattice with
interface-height $0$, so that
\[
\PP_{\bp}^\triangle[ \Cv(\alpha N, N) ] =
\PP_{\bp}^\Lattice[ \Cv(\alpha N, N) ] .
\]
Let $\omega\in\Om_E$, and let $\Ga$ be an $\omega$-open vertical
crossing of $B_{\a N,N}$.
In $\frac12 N$ applications of $\Su\comp\Td$, the images of the
lower endpoint of $\Ga$ remain
in the square part of the lattice, and thus are immobile.
By Proposition~\ref{distancebetweenpaths},
$(\Su\circ\Td)^{N/2}(\Ga)$ contains\vadjust{\goodbreak} a vertical crossing of
$B_{(\alpha+ 1/2) N, N/2}$,
that is, open in $(\Su\circ\Td)^{N/2}(\omega)$.
Since $B_{(\alpha+ 1/2) N, N/2}$ lies entirely within the square
part of $(\Su\comp\Td)^{N/2} \Lat$,
we deduce that
\begin{eqnarray*}
\PP_{(p_0,1-p_0)}^\square\bigl[ \Cv\bigl(\bigl( \alpha+ \tfrac12\bigr) N, \tfrac
12 N \bigr) \bigr]
&=& \PP_{\bp}^{(\Su\comp\Td)^{N/2} \Lattice} \bigl[\Cv\bigl(\bigl(\a
+\tfrac12\bigr)N,N\bigr)\bigr] \\
&\geq&\PP_{\bp}^\triangle[ \Cv(\alpha N, N) ],
\end{eqnarray*}
and the claim is proved.
\end{pf*}

%s4 #&#
\section{Remaining proofs}\label{sectgeneralizations}

%s4.1 #&#
\subsection{Using the box-crossing property}\label{sectcriticalityusingRSW}

Our target in this subsection is to summarize how certain properties of
inhomogeneous percolation models may be deduced from the box-crossing property.
These properties will be used later in this section, and are of
independent interest.

We shall consider bond percolation on the square, triangular and
hexagonal lattices,
and any reference to a lattice shall mean one of these three, duly
embedded in $\RR^2$
as described after Definition~\ref{defbxp}.
%Our arguments may be applied
%to many other graphs, but for the sake of simplicity this is not
%discussed here.
Fix a lattice $\Lat= (V, E)$ with \textit{origin} $0$,
and let $\bp= (p_e\dvtx  e \in E) \in[0,1]^E$.
Denote by $\PP_\bp$ the product measure on $\Omega= \{0,1\}^E$
under which edge $e\in E$ is open with probability~$p_e$. The lattice
$\Lat$
has a dual lattice $\Lat^*=(V^*,E^*)$, and we write $\PP_{1-\bp}^*$
for the
product measure on $\Om^*=\{0,1\}^{E^*}$ under which an edge $e^* \in
E^*$, dual to $e \in E$,
is open with probability $1-p_e$.

For $\nu> 0$, let $\ol\PP_{\bp+\nu}$ be the product measure on $E$
under which $e$ is open with probability $1_{\{p_e>0\}}\min\{p_e+\nu
,1\}$,
and $\ul\PP_{\bp-\nu}$ that under which $e$ is open with probability
$1_{\{p_e=1\}} + 1_{\{p_e<1\}}\max\{p_e-\nu,0\}$. Under these measures,
any edge with $\PP_\bp$-parameter equal to $0$ or $1$ retains this property.
Let $|\cdot|$ denote the Euclidean norm as before and,
for $x \ge0$, define the box
$
S_x=\{z\in\RR^2 \dvtx  |z| < x\}.
$
The open cluster at vertex $v$ is denoted $C_v$, and its \textit{radius}
$\rad(C_v)$ is the supremum of all $x$ such that $C_v$ intersects
$\RR^2 \setminus(v+S_x)$.

%pr10 #&#
\begin{prop}\label{sub-criticalityviaRSW}
Suppose $\PP_{1-\bp}^*$ has the box-crossing property.
\begin{longlist}[(a)]
\item[(a)]
There exist $a,b > 0$ such that, for every $v\in V$,
\[%\label{powerlawdecay}
\PP_{\bp} \bigl(\rad(C_v) \geq k\bigr) \leq ak^{-b},\qquad k \ge0.
\]
\item[(b)]
There exists, $\PP_{\p}$-a.s., no infinite open cluster.
\item[(c)]
For $\nu>0$, there exist $c,d>0$ such that, for every $v\in V$,
\[
\ul\PP_{\bp-\nu}(|C_v|\ge k) \le c e^{-dk},\qquad k \ge0.
\]
\end{longlist}
\end{prop}

%pr11 #&#
\begin{prop}\label{propsuper}
Suppose $\PP_\bp$ has the box-crossing property.
\begin{longlist}[(a)]
\item[(a)]
There exist $a,b > 0$ and $M\in\NN$ such that for every $v\in V$,
there exists $w=w(v)\in V$ with $|v-w|\le M$ and
\[ %\label{powerlawdecay}
\PP_{\bp} \bigl(\rad(C_w) \geq k\bigr) \ge ak^{-b},\qquad  k \ge0.
\]
\item[(b)]
Let $\nu> 0$. There exist $\a>0$ and $M \in\NN$ such that for every
$v \in V$, there
exists $w=w(v)\in V$ with $|v-w|\le M$ and
$\ol\PP_{\p+ \nu}(w \lra\oo)>\a$.
There exists, $\ol\PP_{\bp+\nu}$-a.s., a unique infinite open cluster.
\end{longlist}
\end{prop}

The unusually complicated formulation of this proposition arises from the
possible existence of edges $e$ with
$p_e=0$.

\begin{obs}\label{rem1}
Parts (a) and (b) of Propositions \eqref
{sub-criticalityviaRSW} and \eqref{propsuper}
are valid for any primal/dual pair $G/G^*$ embedded
in $\RR^2$ such that: (i) $G$ is connected, (ii) $G$ and $G^*$ are locally
finite, in that any bounded subset of $\RR^2$ contains finitely many
vertices, and (iii)
there exists $L<\oo$ such that no edge of $G$ or $G^*$ has length
exceeding~$L$.
[Condition (iii) is a consequence of the box-crossing property.]
For Proposition~\ref{sub-criticalityviaRSW}(c), condition (ii)
is replaced by the stronger (ii$'$): there exists $\rho<\oo$ such that
the number of vertices within any translate of the unit disk is no
greater than $\rho$.
The proofs follow those of the lattice case, and are omitted.
\end{obs}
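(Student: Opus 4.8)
The plan is to revisit the proofs of Propositions~\ref{sub-criticality_via_RSW} and~\ref{prop:super} given in the rest of this subsection, and to observe that each step relies only on ingredients that survive the passage to a general primal/dual pair: the Harris--FKG positive association of product measure; the \bxp\ as stated for arbitrary plane graphs in Definition~\ref{def:bxp} (used, when the measure fails to be translation-invariant, in the uniform-over-translates form of Remark~\ref{rem0}); the combination of box-crossings, by positive association, into open (\resp, dual) circuits encircling concentric annuli; planar duality; and the geometric hypotheses (i)--(iii). Connectedness~(i) and local finiteness~(ii) are what make $C_v$, $\rad(C_v)$, the dual graph $G^*$, and the infinite cluster meaningful, while the bounded-edge-length hypothesis~(iii) guarantees that a dual open circuit lying in a sufficiently wide annulus genuinely disconnects, for the primal process, the enclosed region from its complement. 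For Proposition~\ref{sub-criticality_via_RSW}(c), the uniform density bound~(ii$'$) is precisely what renders all constants independent of the vertex~$v$.

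For Proposition~\ref{sub-criticality_via_RSW}(a,\,b), I would fix $L_0 \ge 3L$ and consider, around the embedding of a vertex $v$, the annuli $A_j(v) = \{z\in\RR^2 : 2^jL_0 \le |z-v| \le 2^{j+1}L_0\}$. Let $D_j$ be the event that $A_j(v)$ contains an open circuit of $G^*$ surrounding its inner boundary. This is a decreasing event for $\PP_\bp$, and by the \bxp\ of $\PP_{1-\bp}^*$ together with positive association (four crossings of fixed-aspect-ratio boxes, FKG-combined into a circuit), $\PP_\bp(D_j) = \PP_{1-\bp}^*(D_j) \ge \delta$ for all large $j$, with $\delta>0$ not depending on $v$ (this uses Remark~\ref{rem0} in the non-translation-invariant case). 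Along a suitably sparse subsequence of scales, leaving a gap exceeding $L$ between consecutive annuli, the events are measurable with respect to disjoint edge-sets, hence independent; and by~(iii), on $D_j$ no $\PP_\bp$-open path can join $v$ to the complement of $S_{2^{j+1}L_0}$, since such a path would have to traverse an edge dual to the circuit, which is closed. Thus $D_j \subseteq \{\rad(C_v) < 2^{j+1}L_0\}$, and $\PP_\bp(\rad(C_v) \ge 2^KL_0)$ is at most a product of $\sim cK$ independent factors $1-\delta$, which is polynomially small in $2^KL_0$; this yields~(a) with constants uniform in $v$. Letting $k\to\oo$ gives $\PP_\bp(v\lra\oo)=0$ for every $v$, and since by~(ii) there are only countably many vertices, (b) follows.

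For Proposition~\ref{prop:super}(a,\,b), the same annulus construction --- now with $\PP_\bp$ in place of $\PP_{1-\bp}^*$ --- produces \emph{primal} open circuits in annuli with probability at least $\delta$. By~(i) and~(iii), within bounded distance $M=M(L,L_0)$ of any $v$ there is a vertex $w=w(v)$ lying on such a structure (this is needed because every edge at $v$ may have $\PP_\bp$-parameter $0$), whence $\PP_\bp(\rad(C_w) \ge k) \ge ak^{-b}$. Passing to $\ol\PP_{\bp+\nu}$ and combining circuits at all large scales by positive association gives $\ol\PP_{\bp+\nu}(w\lra\oo)\ge\alpha>0$ uniformly, which is~(a) and the first half of~(b). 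For uniqueness: any two infinite open clusters of $\ol\PP_{\bp+\nu}$ must both meet every sufficiently large annulus, and an open circuit in such an annulus --- present for infinitely many scales --- forces them to coincide; this replaces the Burton--Keane argument and needs only~(iii) and the \bxp. Finally, for Proposition~\ref{sub-criticality_via_RSW}(c): the dual of $\ul\PP_{\bp-\nu}$ stochastically dominates $\PP_{1-\bp}^*$ and hence has the \bxp, so the sharp-threshold argument used below in the proof of~(c) yields exponential decay of $\rad(C_v)$ under $\ul\PP_{\bp-\nu}$, uniformly in $v$. Condition~(ii$'$) bounds $\#\{w : |v-w|\le n\}$ by $C\rho n^2$ uniformly, so the $\ul\PP_{\bp-\nu}$-mean of $|C_v|$ is bounded uniformly in $v$; the Aizenman--Newman tree-graph argument, valid for any product measure, then upgrades this to the exponential volume tail $\ul\PP_{\bp-\nu}(|C_v|\ge k)\le ce^{-dk}$.

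The one point that needs genuine care beyond the lattice setting is the uniformity of every constant over $v$ and over a possibly aperiodic graph, and this is exactly what hypotheses~(ii$'$) and~(iii) and the uniform form of the \bxp\ (Remark~\ref{rem0}) are for. No step above uses translation invariance in an essential way --- the uniqueness of the infinite cluster being handled by circuit-gluing rather than Burton--Keane --- so the proofs of the lattice cases transfer essentially verbatim, which is why the details may be omitted.
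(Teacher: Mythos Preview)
Your sketch largely aligns with the intended extension: the paper itself omits the proof, declaring that the lattice arguments carry over, so the task is precisely to verify that each step survives under hypotheses (i)--(iii) (resp.\ (ii\,$'$)). Your treatment of Proposition~\ref{sub-criticality_via_RSW}(a,\,b) via dual circuits in dyadic annuli, with spacing exceeding $L$ to guarantee independence under product measure, is correct and is essentially the paper's argument.

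There is, however, a genuine gap in your handling of Proposition~\ref{prop:super}(b). You write that ``passing to $\ol\PP_{\bp+\nu}$ and combining circuits at all large scales by positive association gives $\ol\PP_{\bp+\nu}(w\lra\oo)\ge\alpha>0$'', but this does not follow: the \bxp\ supplies only a \emph{uniform positive lower bound} $\delta$ on circuit probabilities, and FKG-combining infinitely many events of probability $\delta<1$ yields probability zero (indeed, critical square-lattice percolation has the \bxp\ but no infinite cluster). What is needed --- and what the paper's lattice proof does --- is the sharp-threshold step: Russo's formula together with an influence bound (as in \eqref{G16}) shows that under $\ol\PP_{\bp+\nu}$ the crossing probabilities tend to $1$ as $N\to\oo$, uniformly over translates, and only then can a block argument (as in \cite[Sect.~5.8]{Grimmett_Graphs}) produce an infinite cluster. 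You invoke sharp-threshold later for part~(c), so the ingredient is available; it simply must be deployed here as well. A related imprecision appears in your sketch of Proposition~\ref{prop:super}(a): primal circuits in disjoint dyadic annuli are not automatically connected to one another, so they do not by themselves yield $\rad(C_w)\ge k$. The paper's proof instead chains overlapping rectangle crossings $A_i(v)$, each intersecting the next, which does give a connected path of the required length; that is the construction you should mirror.

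For Proposition~\ref{sub-criticality_via_RSW}(c) you take a somewhat different route from the paper: you derive exponential decay of $\rad(C_v)$, bound $\EE|C_v|$ uniformly via~(ii\,$'$), and then appeal to a tree-graph/Aizenman--Newman argument to upgrade to exponential volume decay. The paper instead follows Kesten's method (\cite{Kes81}, \cite[Thm~5.86]{Grimmett_RCM}): once dual crossings of $3N\times N$ boxes have probability at least $1-\zeta$ uniformly, a direct block estimate gives exponential decay of $|C_v|$ without passing through the susceptibility. Both approaches work in the generality of the remark; the paper's is more self-contained here, while yours makes the role of~(ii\,$'$) especially transparent.
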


\begin{pf*}{Sketch proof of Proposition \protect\ref{sub-criticalityviaRSW}}
Further details of the arguments used here may be found in
\cite{GrimmettGraphs}, Chapter~5. For definiteness, we consider only
the square lattice, and
analogous proofs are valid for the triangular and hexagonal lattices.
Assume $\PP_{1-\bp}^*$ has the box-crossing property.

(a) If $\rad(C_v) \ge k$, there exist order $\log k$ disjoint annuli
around $v$, none of which contains a dual open cycle surrounding $v$.
By the box-crossing property,
this event has probability
less than $a(1-\g)^{\a\log k}$ for some \mbox{$a,\g,\a>0$}, and the claim
follows. Part (b) is a trivial consequence.

(c)
Let $N \ge1$.
Let $B_N$ be a box of size $3N \times N$, and let $H_N$ be the event
that~$B_N$ has
an open dual crossing in the long direction, in the dual lattice $\Lat
^*$. By the box-crossing property,
there exists $\tau=\tau(\bp)>0$, independent of $B_N$ and $N$, such that
$\PP^*_{1-\bp}(H_N) \ge\tau$
for all large $N$.
By part (a) above, Russo's formula and the theory of
influence (as in~\cite{GG2} and~\cite{GrimmettGraphs}, Theorem~4.33,
e.g.),
%
%e15 #&#
\begin{equation}\label{G16}
\frac{d}{d\eta}\ol\PP^*_{1-\bp+\eta}(H_N) \ge c_1\ol\PP
^*_{1-\bp+\eta}(H_N)
\bigl(1-\ol\PP^*_{1-\bp+\eta}(H_N)\bigr)\log(c_2 N^b),
\end{equation}
for $\eta>0$ and some absolute constants $c_1, c_2 >0$. This
inequality, when integrated
over $(0,\nu)$, yields
$\ol\PP^*_{1-\bp+\nu}(H_N) \to1$ as $N \to\oo$, uniformly in the
choice of $B_N$.

For $\zeta>0$, we may choose $N$ sufficiently large that $\ol\PP
_{1-\bp+\nu}^*(H_N) \ge1-\zeta$ for
all~$B_N$. By passing to the dual and using the method of proof of
\cite{Kes81}, Theorem~1, with $\zeta$ small (see also~\cite{GrimmettRCM}, Theorem 5.86), one obtains
the exponential decay of cluster-volume.
\end{pf*}

\begin{pf*}{Sketch proof of Proposition \protect\ref{propsuper}}
For simplicity, we consider only the square lattice.

(a)
Let $v \in V$.
For odd $i \ge1$, let $A_{i}(v)$ be the event that $v + [0,2^i]\times
[0,2^{i-1}]$ has a
horizontal open crossing; for even $i \ge1$, let $A_{i}(v)$
be the event that $v +[0,2^{i-1}]\times[0,2^{i}]$ has
a vertical open crossing. By the box-crossing property, there exist
$\tau>0$ and $I
\in\NN$ such that
$\PP_\bp(A_i(v))>\tau$ for $i \ge I$ and $v \in V$.
Let $M=2^{I-1}$ and $J > I + \log_2 k$. There exists $w$ with
$|v-w|\le M$ such that on
the event $\bigcap_{i=I}^J A_i(v)$, we have $\rad(C_w) \ge k$.
The claim follows by positive association and the box-crossing property.

%f14 #&#
\begin{figure}[b]

\includegraphics{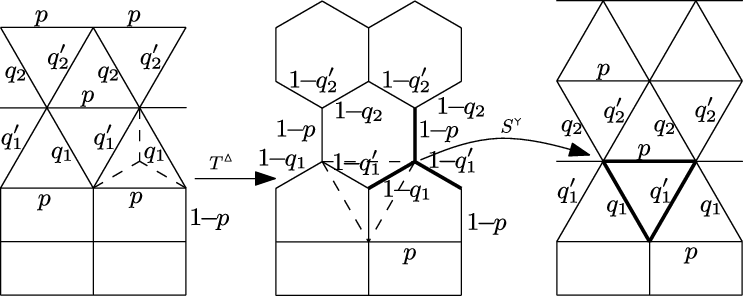}

\caption{A mixed triangular lattice (left) with the highly
inhomogeneous measure above the interface. The transformation
$\Sd\circ\Tu$ moves the interface down by one unit. Every triangle
is parametrized by a self-dual triplet.}
\label{figgeneraltriangularRSWtransfo}
\end{figure}

(b)
Of the ways of proving this, we choose
to follow the proof of Proposition~\ref{sub-criticalityviaRSW}(c)
and~\cite{GrimmettGraphs}, Theorem 5.64.
Let $N \ge1$, let $B_N=[0,8N] \times[0,2N]$, and let $f\dvtx \RR^2 \to
\RR^2$ comprise
a rotation and a translation.
Let $H_N$ be the event that $f(B_N)$ has
an open ``horizontal'' crossing, and in addition the boxes
$f([0,2N]\times[0,2N])$ and
$f([6N, 8N]\times[0,2N])$ have open ``vertical'' crossings
(``horizontal''
and ``vertical''
refer to the orientation of $B_N$). By the box-crossing property and
positive association,
there exists $\tau>0$ such that $\PP_\bp(H_N) \ge\tau$ for all
large $N$, uniformly in~$f$.
By the argument leading to \eqref{G16},
$\ol\PP_{\bp+\nu}(H_N) \to1$ as $N \to\oo$, uniformly in~$f$.
We pick $N$ sufficiently large, and adapt the block
argument of~\cite{GrimmettGraphs}, Section~5.8 to deduce the claim.
The second assertion is an elementary consequence of the box-crossing
property and the
existence of
open paths in annuli.
\end{pf*}

%s4.2 #&#
\subsection{\texorpdfstring{Proofs of Theorems \protect\ref{generaltriangularRSW} and \protect\ref{generaltriangularcriticality}}
{Proofs of Theorems 1.4 and 1.6}}

The proof of
Theorem~\ref{generaltriangularRSW} follows exactly that of
Section~\ref{sectproofsquaretotriangle} on noting that each
triangle of the mixed triangular lattice
of Figure~\ref{figgeneraltriangularRSWtransfo} has three edges
with parameters forming a self-dual triplet,
and the constants of Propositions
\ref{horizontalRSWsquaretotriangle} to \ref
{verticalRSWtriangletosquare}
depend only (in the current setting) on the value of $p$ and not
otherwise on $\q$ and $\q'$.
The hexagonal-lattice case follows by a single application of the star--triangle
transformation.

Since $\PP^\triangle_{p, \q, \q'}$ is increasing in $\q$ and $\q'$,
and since the nonexistence of an infinite component is a decreasing event,
Theorem~\ref{generaltriangularcriticality}(a) follows from
Proposition~\ref{sub-criticalityviaRSW}(b).

Turning to part (b) of Theorem~\ref{generaltriangularcriticality},
assume \eqref{G17} holds with $\de>0$. Let $\eps=\frac14 \de$
and note from \eqref{G17} that
$p,q_n,q_n' < 1-\eps$ for $n \in\ZZ$.
Therefore, $p+\eps,\break q_n + \eps, q'_n+\eps<1$ for all $n$, and
\[
\kappa_\tri(p + \eps,q_n + \eps, q_n' + \eps) \leq0,\qquad  n \in
\ZZ.
\]
By Theorem~\ref{generaltriangularRSW}
and the monotonicity of measures, the measure of the dual process,
$\PP^\hex_{1- p - \eps,1- \q- \eps,1- \q'- \eps}$, has the
box-crossing property.
The claim
follows by Proposition~\ref{sub-criticalityviaRSW}(c) with $\nu=
\eps$.

Assume finally that
\eqref{generaltriangularsuper-criticalitycondition} holds
with $\de>0$.
Let $\eps=\frac13 \min\{\de, p\}$ and write
\[
x^+ = \max\{x, 0\},\qquad
\what x = x1_{\{x \ge\eps\}}.
\]
Then
\[
\kappa_\tri\bigl( (p - \eps)^+,(q_n - \eps)^+, (q_n' - \eps
)^+\bigr) \geq0,\qquad  n \in\ZZ.
\]
By Theorem~\ref{generaltriangularRSW} and the monotonicity of
measures, the associated
product measure on the triangular lattice has the box-crossing property.
By Proposition~\ref{propsuper}(b) with \mbox{$\nu= \eps$},
and the fact that $\PP_{\what p, \what\q, \what\q'}((v+S_M)
\subseteq
C_v )$ is bounded from $0$ uniformly
in \mbox{$v\in V$}, we have that
$\PP_{\what{p},\what\q,\what\q'}$ is uniformly supercritical.
By monotonicity of measures, $\PP_{p,\q,\q'}$ is uniformly
supercritical as claimed.

The same arguments are valid for the hexagonal lattice.

%s4.3 #&#
\subsection{\texorpdfstring{Proofs of Theorems \protect\ref{generalsquareRSW} and \protect\ref{generalsquarecriticality}}
{Proofs of Theorems 1.5 and 1.7}}

Let $\q=1-\q'$ satisfy \eqref{criticalsquare2} with \mbox{$\eps>0$}, and
let $p= 1-p'=\frac12 \eps$. We may pick
$r_n\in(0,1)$ such that $\kappa_\tri(p,q_n,r_n)=0$ for all $n$,
and we write $r_n'=1-r_n$.
By Theorem~\ref{generaltriangularRSW},
the measure $\PP_{p,\q,\r}^\tri$ has the box-crossing property, and
we propose to
transport this property to
the square-lattice measure $\PP_{\q,\q'}$ via the star-triangle
transformation.

%f15 #&#
\begin{figure}

\includegraphics{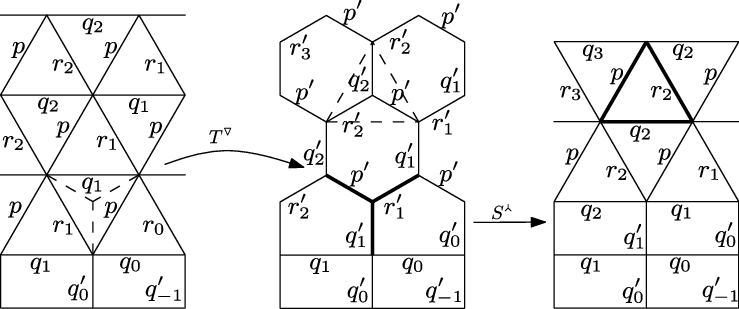}

\caption{\textit{Left}: The measure $\Pqsq$ on $\Lat$.
In the triangular part the measure is $\PP^\triangle_{p, \q, \r}$
on a rotated lattice,
and in the square part it is $\PP^\square_{\q,\q'}$.
\textit{Middle, right}: Application of $\Su\comp\Td$ transforms
$\Lat$ to a~
copy of itself shifted upward and sideways.}
\label{figgeneralsquareRSWtransfo}
\end{figure}

Let $\Lat=(V,E)$ be the mixed triangular lattice on the left
of Figure~\ref{figgeneralsquareRSWtransfo},
and denote by $\Pqsq$ the product measure given there.
Under $\Pqsq$, all triangles in $\Lattice$ have self-dual triplets.
Thus, $\Td$ acts on $\Om_E$ endowed with $\Pqsq$ in the manner of
Section~\ref{secttransformation}
(with parameters varying between triangles),
and the ensuing measure is given in the middle figure.
Then $\Su$ acts on edge-configurations of $\Td\Lattice$ (with
parameters varying between stars).
The ensuing lattice $(\Su\comp\Td)\Lattice$ is illustrated on the
right, and
it may be noted that the corresponding measure is precisely that of
$\Lat$ shifted
upward and rightward.

In the triangular part of $\Lattice$, $\Pqsq$ corresponds to the
measure $\PP^\tri_{p, \q, \r}$,
while in the square part it corresponds to $\PP^{\square}_{\q, \q'}$.\vspace*{1pt}
By Theorem~\ref{generaltriangularRSW}, $\PP^\tri_{p, \q, \r}$
has the box-crossing property,
and thus it remains to adapt the proofs of Propositions~\ref
{horizontalRSWtriangletosquare} and~\ref{verticalRSWtriangletosquare}.

Proposition~\ref{verticalRSWtriangletosquare} holds because of
its nonprobabilistic bound for the drift of a path\vadjust{\goodbreak} under $\Su\comp\Td$.
Its proof is easily adapted to give, as there, that, for $\alpha> 0$
and $N \in2\NN$,
\[
\PP_{\q, \q'}^\square\bigl[\Cv\bigl( \bigl(\alpha+ \tfrac12 \bigr) N , \tfrac
12 N \bigr) \bigr]
\geq\PP_{\q,\r,p}^\triangle[\Cv( \alpha N, N)]. %
\]

The proof of Proposition~\ref{horizontalRSWtriangletosquare}
requires the probabilistic estimate of Lem\-ma~\ref
{columngrowthcontrol}. This hinges on
the application of $\Su$ to configurations on upward pointing stars.
The key fact is
that $\eta(p_0) >0$, with $\eta$ as in \eqref{G13} and $p_0$ the parameter
associated with a horizontal edge in the triangular lattice. In the
present situation, such
edges have parameters $q_n$. Since $q_n \ge\eps$, we have that $\eta
(q_n) \ge\eta(\eps)>0$.
This results in an altered version of Lemma \ref
{columngrowthcontrol} with $\eta(p_0)$
replaced by $\eta(\eps)$. The proof continues as before, and a
version of \eqref{G14} results.
Theorem~\ref{generalsquareRSW} is proved.

Finally, consider Theorem~\ref{generalsquarecriticality}, and assume
\eqref{G15}.
Let $\nu_n=(1-q_n-q_n')/2$, and
apply Theorem~\ref{generalsquareRSW} to the self-dual measure $\PP
^{\square}_{\q+\bnu,\q'+\bnu}$.\vspace*{1pt}
Part (a) then follows by Proposition~\ref{sub-criticalityviaRSW}(b).
The proofs of (b, c)
hold as for the triangular lattice.

\section*{Acknowledgment}
G. R. Grimmett acknowledges the suggestion of Harry Kesten in 1987 that the Baxter--Enting
paper~\cite{Baxter399} might have implications for percolation.

%suskaldyti doi

% imsref loaded by akundreckaite, 2012-05-23 08:31:44

\printaddresses

\end{document}